\documentclass[11pt]{amsart}
\usepackage[margin=1in]{geometry}

\usepackage[utf8]{inputenc}

\usepackage{graphbox} 
\usepackage{mathtools, amsthm, amsfonts, amssymb, amsmath}
\usepackage{accents} 
\usepackage{microtype}
\usepackage{enumerate}
\usepackage{stackengine}

\usepackage{multirow}
\usepackage{array}
\usepackage{longtable}
\usepackage{thmtools}
\usepackage[margin=1in]{geometry}
\usepackage{xcolor}
\usepackage{tikz}
\usepackage{tikz-cd}
\usepackage{tikz-3dplot}
\usetikzlibrary{patterns,external}
\usetikzlibrary{calc}
\usetikzlibrary{shapes.multipart}

\usepackage[numbers,sort]{natbib}

\newcommand{\rect}{\raisebox{-1.5pt}{\hspace{-4pt}\fontsize{90}{90} \textbf{-}}}

\definecolor{fondo}{rgb}{0.898,0.996,0.898}
\definecolor{diagrammagenta}{RGB}{255,0,255}
\pgfkeys{/tikz/.cd,
  K/.store in=\K,
  K=1  
   }

\author[T. Brysiewicz]{Taylor Brysiewicz}
\address[T. Brysiewicz]{Department of Mathematics, University of Western Ontario, London, Canada (ORCID: 0000-0003-4272-5934)}
\email{tbrysiew@uwo.ca}
\author[F. Gesmundo]{Fulvio Gesmundo}
\address[F. Gesmundo]{Institut de Mathématiques de Toulouse; UMR5219 -- Université de Toulouse; CNRS -- UPS, F-31062 Toulouse Cedex 9, France (ORCID: 0000-0001-6402-021X)}
\email{fgesmund@math.univ-toulouse.fr}

\newcommand{\mycomment}[1]{}

\usepackage[all,cmtip]{xy}

\theoremstyle{definition}
\newtheorem{theorem}{Theorem}[section]

\newtheorem{conjecture}[theorem]{Conjecture}

\newtheorem{proposition}[theorem]{Proposition}

\newcommand{\SO}{{\textrm{SO}}}

\newcommand{\HSO}{{\textrm{HSO}}}
\newcommand{\mydef}[1]{{#1}}

\numberwithin{equation}{section}

\usepackage[breaklinks,bookmarks=false, hidelinks]{hyperref} 
\hypersetup{
    colorlinks,
    linkcolor={red!50!black},
    citecolor={green!50!black},
    urlcolor={blue!80!black}
}

\numberwithin{table}{section}

\usepackage{tikz}
\usetikzlibrary{calc}
\tikzstyle{vertex}=[circle, draw, inner sep=0pt, minimum size=6pt, fill=black]

\renewcommand{\O}{{\textrm{O}}}
\keywords{hollow matrices, orthogonal group, face lattice, intersection lattice, witness set}
\subjclass{15B10, 53A05, 52B10, 14Q65}
\title{Marvelous slices of orthogonal matrices}
\date{}

\usepackage[margin=1in]{geometry}

\begin{document}

\begin{abstract}
The space of $4 \times 4$ special orthogonal matrices with zeros on the diagonal decomposes into the union of $14$ irreducible surfaces whose intersections are beautifully encoded  by the cuboctahedron. Using this decomposition, we exhibit a totally real witness set for $\SO(4)$. We explain how to obtain a similar decomposition for $\SO(5)$, where the $64$ components can be grouped to obtain such a correspondence with the face lattice of a $3$-polytope. We show that no such pattern exists for $\SO(6)$.
\end{abstract}

\maketitle

\vspace*{-15pt}
\section{Main Result}
In this note, we observe the beautiful geometry of $4 \times 4$  \textit{hollow} special orthogonal matrices
\[
\mydef{\textrm{HSO}(4)} = \{M \in \textrm{Mat}_{4 \times 4} (\mathbb{R}) \mid \textrm{diag}(M) = (0,0,0,0) \text{ and }M \in \textrm{SO}(4) \} \subseteq \mathbb{R}^{4 \times 4} = \mathbb{R}^{16}.
\]
Our point of departure is a full description of the components of $\HSO(4)$ and their intersections.
\begin{theorem}
\label{thm:maintheorem}
The variety $\textrm{HSO}(4) \subseteq \mathbb{R}^{16}$ is the union of $14$ irreducible surfaces: six tori of degree four and eight spheres of degree two. There exists a bijection between these components and the facets of the cuboctahedron $\mathcal C$ (illustrated on the left of \autoref{fig:polytopes}) satisfying
\begin{enumerate}
\item The six tori $\SO(2) \times \SO(2) \cong \mathbb{S}^1 \times \mathbb{S}^1$ correspond to the six quadrilateral faces of $\mathcal C$.
\item The eight spheres $\mathbb{S}^2$ correspond to the eight triangular faces of $\mathcal C$.
\item Components intersect in a curve if and only if the corresponding facets share an edge. 
\item Components intersect in points if and only if the corresponding facets share a vertex. 
\item Curve intersections of components are copies of $\SO(2) \cong \mathbb{S}^1$.
\item Point intersections  are signed permutation matrices and their negatives, i.e. $\mathbb{S}^0$. 
\end{enumerate} 
The relevant bijection is illustrated in \autoref{fig:CuboctahedronFacets}.
\end{theorem}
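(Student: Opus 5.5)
The plan is to use the quaternionic parametrization of $\SO(4)$, which turns the hollowness condition into a purely combinatorial support condition. Identify $\mathbb R^4$ with the quaternions $\mathbb H$, with basis $1,i,j,k$ indexed by $0,1,2,3$. Every $M\in\SO(4)$ has the form $M(p,q)\colon x\mapsto p\,x\,\bar q$ for a pair of unit quaternions $(p,q)\in\mathbb S^3\times\mathbb S^3$. This pair is unique up to the simultaneous sign $(p,q)\mapsto(-p,-q)$. The entries of $M(p,q)$ are bilinear in $(p,q)$.

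The first key step is to compute the diagonal explicitly. One checks $M_{00}=\langle p,q\rangle$, and similarly for $M_{11},M_{22},M_{33}$, obtaining
\[
\operatorname{diag}M(p,q)=H\,(p_0q_0,\,p_1q_1,\,p_2q_2,\,p_3q_3)^{T},
\]
where $H$ is a $4\times 4$ $\pm1$ Hadamard-type matrix, hence invertible. This is a routine multiplication-table calculation, but everything hinges on it, so I would verify it carefully. It gives
\[
\HSO(4)=\{(p,q)\in\mathbb S^3\times\mathbb S^3 : p_\ell q_\ell=0 \text{ for } \ell=0,\dots,3\}/\pm1 .
\]
Thus a point lies in $\HSO(4)$ exactly when $\operatorname{supp}p$ and $\operatorname{supp}q$ are disjoint.

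For disjoint nonempty $S,T\subseteq\{0,1,2,3\}$, let $X_{S,T}$ be the image of $\mathbb S^{S}\times\mathbb S^{T}$, the product of the unit spheres in the coordinate subspaces $\mathbb R^S$ and $\mathbb R^T$. Then $\HSO(4)$ is the union of the maximal such sets, namely those with $S\sqcup T=\{0,1,2,3\}$.
\begin{enumerate}
\item The splittings with $|S|=|T|=2$ give six sets $(\mathbb S^1\times\mathbb S^1)/\pm1$. Each is again a torus, and its elements are block rotations, i.e.\ $\SO(2)\times\SO(2)$ in suitable coordinates.
\item The splittings with $\{|S|,|T|\}=\{1,3\}$ give $4+4=8$ sets. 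In each of these, $p=\pm e_a$ (or $q=\pm e_a$), and after the sign quotient the set is a copy of $\mathbb S^2$.
\end{enumerate}
Irreducibility holds because each set is the image of an irreducible variety, and no inclusions occur among them because the supports are maximal.

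For the degrees, each sphere is a linear image of $\mathbb S^2$ (for fixed $p$ the map is linear in $q$), so it is a conic surface of degree $2$. Each torus is the image of a product of two conics under a bilinear, Segre-type map. I would compute $(h_1+h_2)^2$ on $C_1\times C_2$, giving $8$, and then divide by $2$ for the generically $2{:}1$ quotient, giving degree $4$.

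Next come the intersections. Because a point is determined by its supports, I would show $X_{S,T}\cap X_{S',T'}=X_{S\cap S',\,T\cap T'}$, which is empty if either intersection is empty. This requires one pointwise check: every point of $X_{S,T}$ with supports contained in $S\cap S'$ and $T\cap T'$ lies in the smaller stratum, and the sign ambiguity does not create extra points.
\begin{enumerate}
\item Intersections of type $(1,2)$ or $(2,1)$ are circles $\mathbb S^1$. There are $4\cdot3+4\cdot3=24$ of them, and they are copies of $\SO(2)$ embedded as one rotation block.
\item Intersections of type $(1,1)$ are the images of $\{\pm e_a\}\times\{\pm e_b\}$ modulo sign. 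There are $12$ ordered pairs $(a,b)$, each giving $\mathbb S^0$: two antipodal matrices $\pm P$ with $P$ a signed permutation matrix.
\end{enumerate}

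The combinatorial identification comes from the $A_3$ root system. The $12$ points $e_a-e_b$ ($a\ne b$) lie in the hyperplane $\{x\in\mathbb R^4:\sum x_\ell=0\}$, and their convex hull is the cuboctahedron $\mathcal C$. For an ordered splitting $S\sqcup T$, the functional $x\mapsto\sum_{a\in S}x_a-\sum_{b\in T}x_b$ is maximized on $\mathcal C$ exactly at $\{e_a-e_b: a\in S,\ b\in T\}$. This yields the facets: squares for $|S|=|T|=2$ and triangles for $\{|S|,|T|\}=\{1,3\}$. Faces of $\mathcal C$ correspond to disjoint nonempty pairs $(S,T)$, ordered by reverse inclusion, and they match the strata $X_{S,T}$. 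Two facets share an edge or a vertex exactly when the componentwise intersection of their pairs has type $(1,2)/(2,1)$ or $(1,1)$, respectively. This proves items (3)--(6) of the theorem simultaneously.

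The main obstacle is the first step: getting the diagonal formula right, together with the invertibility of $H$. A secondary point needing care is the effect of the $\pm1$ quotient on both topology and degree: the tori stay tori, the spheres $\mathbb S^2$ survive, and the point intersections are $\mathbb S^0$ rather than four points.
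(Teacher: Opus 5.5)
Your proof is correct, and it takes a genuinely different route from the paper.

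\textbf{The paper's route.} It writes the eight spheres and six tori as explicit matrix parametrizations. It shows these exhaust $\HSO(4)$ by the degree count $8\cdot 2+6\cdot 4=40=\deg\SO(4)$, using the known degree of $\SO(4)$ and the fact that a linear section of the expected dimension cannot have larger degree. It then finds the intersections by substituting the block patterns into one sphere parametrization (even sign changes act transitively on the spheres) and by intersecting zero patterns.

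\textbf{Your route.} You work on the double cover $\mathbb S^3\times\mathbb S^3\to\SO(4)$. Your diagonal formula checks out: $H$ is the Sylvester--Hadamard matrix with rows $(1,1,1,1)$, $(1,1,-1,-1)$, $(1,-1,1,-1)$, $(1,-1,-1,1)$. So hollowness is exactly disjointness of $\operatorname{supp}p$ and $\operatorname{supp}q$.

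\textbf{What your route buys.}
\begin{enumerate}
\item The proof is self-contained: completeness of the component list needs neither $\deg\SO(4)=40$ nor a B\'ezout-type bound.
\item Every intersection statement follows from the single identity $X_{S,T}\cap X_{S',T'}=X_{S\cap S',T\cap T'}$.
\item The cuboctahedron appears for a structural reason, as the $A_3$ root polytope whose faces $F_{S,T}$ obey the same intersection rule. This speaks to the ``intrinsic reason'' the paper leaves open.
\item It explains facts the paper checks case by case. A sphere and a torus either meet in a circle or are disjoint. Only double transpositions occur, because the underlying permutation of $x\mapsto e_a x\bar e_b$ is the product of the two Klein four-group elements attached to $a\neq b$.
\end{enumerate}

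\textbf{What the paper's route buys.} It yields the explicit coordinates used later for the witness set. Its hands-on method also carries over to $\SO^\star(5)$, where no quaternionic splitting is available.

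\textbf{Minor points.}
\begin{enumerate}
\item Faces and strata are both ordered by componentwise inclusion of $(S,T)$, not reverse inclusion. This is harmless, since your intersection identity is what you actually use.
\item The type-$(1,2)$ circles are not a single rotation block. As the paper notes, they embed diagonally, with both $2\times 2$ blocks of the adjacent torus varying.
\item Once you know the tori are block rotations, degree four is immediate as a complete intersection of two quadrics in a $4$-dimensional linear space. Your Segre computation is also correct, since the bidegree-$(1,1)$ map has no base points on $C_1\times C_2$.
\end{enumerate}
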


\begin{figure}[ht!]
\begin{tikzpicture}[x={(1.4142cm, 0.8165cm)}, y={(-1.4142cm, 0.8165cm)}, z={(0.0000cm, 1.6330cm)}, scale=1.2*0.75, line join=round]
\coordinate (v0) at (-1.0000, -1.0000, 0.0000);
\coordinate (v1) at (-1.0000, 1.0000, 0.0000);
\coordinate (v2) at (1.0000, -1.0000, 0.0000);
\coordinate (v3) at (1.0000, 1.0000, 0.0000);
\coordinate (v4) at (-1.0000, 0.0000, -1.0000);
\coordinate (v5) at (-1.0000, 0.0000, 1.0000);
\coordinate (v6) at (1.0000, 0.0000, -1.0000);
\coordinate (v7) at (1.0000, 0.0000, 1.0000);
\coordinate (v8) at (0.0000, -1.0000, -1.0000);
\coordinate (v9) at (0.0000, -1.0000, 1.0000);
\coordinate (v10) at (0.0000, 1.0000, -1.0000);
\coordinate (v11) at (0.0000, 1.0000, 1.0000);
\definecolor{polycolor}{RGB}{250, 51, 41} 
\fill[polycolor, opacity=0.8] (v6) -- (v10) -- (v3) -- cycle;
\fill[polycolor, opacity=0.8] (v7) -- (v6) -- (v3) -- cycle;
\fill[polycolor, opacity=0.8] (v6) -- (v10) -- (v4) -- cycle;
\fill[polycolor, opacity=0.8] (v11) -- (v10) -- (v3) -- cycle;
\fill[polycolor, opacity=0.8] (v10) -- (v4) -- (v1) -- cycle;
\fill[polycolor, opacity=0.8] (v6) -- (v8) -- (v4) -- cycle;
\fill[polycolor, opacity=0.8] (v7) -- (v6) -- (v2) -- cycle;
\fill[polycolor, opacity=0.8] (v11) -- (v10) -- (v1) -- cycle;
\fill[polycolor, opacity=0.8] (v6) -- (v8) -- (v2) -- cycle;
\fill[polycolor, opacity=0.8] (v11) -- (v7) -- (v3) -- cycle;
\fill[polycolor, opacity=0.8] (v8) -- (v4) -- (v0) -- cycle;
\fill[polycolor, opacity=0.8] (v11) -- (v5) -- (v1) -- cycle;
\fill[polycolor, opacity=0.8] (v9) -- (v7) -- (v2) -- cycle;
\fill[polycolor, opacity=0.8] (v5) -- (v4) -- (v1) -- cycle;
\fill[polycolor, opacity=0.8] (v9) -- (v8) -- (v2) -- cycle;
\fill[polycolor, opacity=0.8] (v11) -- (v9) -- (v7) -- cycle;
\fill[polycolor, opacity=0.8] (v9) -- (v8) -- (v0) -- cycle;
\fill[polycolor, opacity=0.8] (v11) -- (v9) -- (v5) -- cycle;
\fill[polycolor, opacity=0.8] (v5) -- (v4) -- (v0) -- cycle;
\fill[polycolor, opacity=0.8] (v9) -- (v5) -- (v0) -- cycle;
\draw[black, thick]
(v1) -- (v10)
(v4) -- (v10)
(v2) -- (v6)
(v6) -- (v8)
(v3) -- (v10)
(v3) -- (v6)
(v6) -- (v10)
(v3) -- (v7)
(v3) -- (v11)
(v2) -- (v8)
(v1) -- (v4)
(v1) -- (v11)
(v7) -- (v11)
(v2) -- (v7)
(v4) -- (v8)
;
\draw[black, dashed]
(v0) -- (v4)
(v0) -- (v8)
(v0) -- (v5)
(v0) -- (v9)
(v5) -- (v9)
(v2) -- (v9)
(v7) -- (v9)
(v1) -- (v5)
(v5) -- (v11)
;
\end{tikzpicture} \quad \quad  \quad \quad 
\tdplotsetmaincoords{0.00}{-90.00}
\begin{tikzpicture}[tdplot_main_coords,baseline=-2.6cm, xshift=1cm, scale=1.1*0.75, yshift = -0.4cm, line join=round]
\coordinate (d0) at (1.8801, -0.0000, -1.6048);
\coordinate (d1) at (0.0000, 1.8801, -1.6048);
\coordinate (d2) at (-1.3294, 1.3294, 1.6048);
\coordinate (d3) at (-1.3294, -1.3294, 1.6048);
\coordinate (d4) at (1.3294, 1.3294, 1.6048);
\coordinate (d5) at (1.3294, -1.3294, 1.6048);
\coordinate (d6) at (0.0000, -1.8801, -1.6048);
\coordinate (d7) at (-1.8801, 0.0000, -1.6048);
\coordinate (d8) at (0.0000, 2.9814, 1.0541);
\coordinate (d9) at (2.1082, 2.1082, -1.0541);
\coordinate (d10) at (-0.0000, -2.9814, 1.0541);
\coordinate (d11) at (2.1082, -2.1082, -1.0541);
\coordinate (d12) at (-2.1082, 2.1082, -1.0541);
\coordinate (d13) at (-2.9814, 0.0000, 1.0541);
\coordinate (d14) at (-0.0000, -0.0000, 2.0479);
\coordinate (d15) at (2.9814, 0.0000, 1.0541);
\coordinate (d16) at (-2.1082, -2.1082, -1.0541);
\coordinate (d17) at (0.0000, 0.0000, -2.0479);
\definecolor{polycolor}{RGB}{154, 100, 246} 
\fill[polycolor, opacity=0.8] (d7) -- (d6) -- (d17) -- cycle;
\fill[polycolor, opacity=0.8] (d0) -- (d6) -- (d17) -- cycle;
\fill[polycolor, opacity=0.8] (d1) -- (d7) -- (d17) -- cycle;
\fill[polycolor, opacity=0.8] (d1) -- (d0) -- (d17) -- cycle;
\fill[polycolor, opacity=0.8] (d7) -- (d6) -- (d16) -- cycle;
\fill[polycolor, opacity=0.8] (d0) -- (d6) -- (d11) -- cycle;
\fill[polycolor, opacity=0.8] (d1) -- (d7) -- (d12) -- cycle;
\fill[polycolor, opacity=0.8] (d1) -- (d0) -- (d9) -- cycle;
\fill[polycolor, opacity=0.8] (d6) -- (d11) -- (d10) -- cycle;
\fill[polycolor, opacity=0.8] (d6) -- (d16) -- (d10) -- cycle;
\fill[polycolor, opacity=0.8] (d7) -- (d16) -- (d13) -- cycle;
\fill[polycolor, opacity=0.8] (d7) -- (d12) -- (d13) -- cycle;
\fill[polycolor, opacity=0.8] (d0) -- (d11) -- (d15) -- cycle;
\fill[polycolor, opacity=0.8] (d0) -- (d9) -- (d15) -- cycle;
\fill[polycolor, opacity=0.8] (d1) -- (d12) -- (d8) -- cycle;
\fill[polycolor, opacity=0.8] (d1) -- (d9) -- (d8) -- cycle;
\fill[polycolor, opacity=0.8] (d5) -- (d11) -- (d10) -- cycle;
\fill[polycolor, opacity=0.8] (d5) -- (d11) -- (d15) -- cycle;
\fill[polycolor, opacity=0.8] (d3) -- (d16) -- (d10) -- cycle;
\fill[polycolor, opacity=0.8] (d3) -- (d16) -- (d13) -- cycle;
\fill[polycolor, opacity=0.8] (d4) -- (d9) -- (d15) -- cycle;
\fill[polycolor, opacity=0.8] (d4) -- (d9) -- (d8) -- cycle;
\fill[polycolor, opacity=0.8] (d2) -- (d12) -- (d13) -- cycle;
\fill[polycolor, opacity=0.8] (d2) -- (d12) -- (d8) -- cycle;
\fill[polycolor, opacity=0.8] (d3) -- (d5) -- (d10) -- cycle;
\fill[polycolor, opacity=0.8] (d4) -- (d5) -- (d15) -- cycle;
\fill[polycolor, opacity=0.8] (d2) -- (d3) -- (d13) -- cycle;
\fill[polycolor, opacity=0.8] (d2) -- (d4) -- (d8) -- cycle;
\fill[polycolor, opacity=0.8] (d3) -- (d5) -- (d14) -- cycle;
\fill[polycolor, opacity=0.8] (d4) -- (d5) -- (d14) -- cycle;
\fill[polycolor, opacity=0.8] (d2) -- (d3) -- (d14) -- cycle;
\fill[polycolor, opacity=0.8] (d2) -- (d4) -- (d14) -- cycle;
\draw[black, dashed]
(d6) -- (d11)
(d6) -- (d16)
(d7) -- (d16)
(d6) -- (d17)
(d7) -- (d17)
(d7) -- (d12)
(d0) -- (d11)
(d0) -- (d17)
(d0) -- (d9)
(d1) -- (d17)
(d1) -- (d12)
(d1) -- (d9)
;
\draw[black, thick]
(d10) -- (d11)
(d10) -- (d16)
(d13) -- (d16)
(d12) -- (d13)
(d11) -- (d15)
(d9) -- (d15)
(d8) -- (d12)
(d8) -- (d9)
(d5) -- (d10)
(d5) -- (d15)
(d3) -- (d10)
(d3) -- (d13)
(d5) -- (d14)
(d3) -- (d14)
(d4) -- (d15)
(d4) -- (d8)
(d4) -- (d14)
(d2) -- (d13)
(d2) -- (d8)
(d2) -- (d14)
;
\end{tikzpicture}
\caption{The cuboctahedron $\mathcal C$ (left) and another polytope $\mathcal P$ (right). Their facets are in bijection with components of $\textrm{HSO}(4)$ and quadruples of components of $\textrm{SO}^\star(5)$ respectively. The face lattice of each gives the intersection lattice of the corresponding components in agreement with \autoref{thm:maintheorem} and \autoref{thm:SO5}.} 
\label{fig:polytopes}
\end{figure}
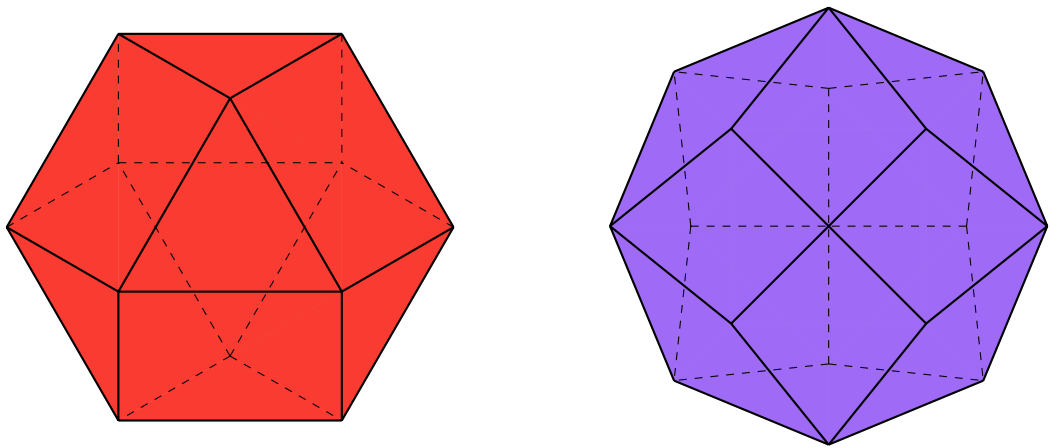

For $\textrm{SO}(5)$, the story is similar. There is a zero pattern which slices $\textrm{SO}(5)$ down to a union $\textrm{SO}^\star(5)$ of surfaces. As with $\textrm{HSO}(4)$, this slice is \textit{degree-generic} in the sense that the sum of the degrees of the $64$ components equals the degree $384$ of $\textrm{SO}(5)$ \cite{SOn}. We put quadruples of components of $\textrm{SO}^\star(5)$ in bijection with the facets of the polytope on the right of \autoref{fig:polytopes} in such a way that an analogue of \autoref{thm:maintheorem} holds. The full statement is given in \autoref{thm:SO5}.

In \autoref{sec:surfaces} we describe the $14$ surfaces of $\HSO(4)$ and establish the bijection between these surfaces and the facets of the cuboctahedron $\mathcal C$. In \autoref{secsec:curves} and \autoref{secsec:points}, respectively, we describe the one-dimensional and zero-dimensional intersections of these components. In \autoref{sec:witness} we use this decomposition to find a totally real witness set for $\SO(4)$, resolving the conjecture \cite[Conjecture 7.1]{SOn} for $n=4$. We extend this story to $n=5$ in \autoref{sec:SO5}: we identify $64$ components of $\textrm{SO}^\star(5)$ which are in bijection with the $16$ facets of the polytope $\mathcal P$ shown in the right of \autoref{fig:polytopes} when grouped by fours. Just as in the case of $n=4$, the face lattice of this polytope encodes the intersections between groups of components, as detailed in \autoref{thm:SO5}. We show the pattern does not continue for $n=6$, see \autoref{thm:SO6}. All results in this paper, other than \autoref{thm:SO6}, can be verified by hand. For convenience, we provide code to verify all results using computer algebra software  \cite{Macaulay2,Oscar,OscarBook,Polymake,Nauty}, available at \cite{BGmarvelousRepo}.

\subsection{Related work and motivation} Hollow matrices appear in several areas: we refer to \cite{gentle2007matrix} for some examples. This includes the case of adjacency matrices of graphs, and that of dissimilarity matrices in statistics \cite{trosset2002extensions}. Hollow orthogonal matrices arise when looking for normal forms of non-definite operators with desirable stability properties \cite{Nic} and in other combinatorial frameworks for which we refer to \cite{Colbourn2010}. 

Our investigation began at the $2018$ ICERM semester on nonlinear algebra. We sought to generalize the formula for the degree of the special orthogonal group in \cite{SOn} to Stiefel manifolds. One strategy we considered was to compute the degrees of these homogeneous spaces by predicting their intersection with select linear spaces. The goal was to re-establish the $\textrm{SO}(n)$ degree formula via enumerative combinatorics, rather than an application of Kazarnovskii's theorem \cite{Kazarnovskii}. This proved incredibly successful for $\textrm{SO}(4)$ (as described in \autoref{thm:maintheorem}) since the hollow linear space decomposes $\textrm{SO}(4)$ combinatorially. This program is successful for $\textrm{SO}(5)$ as well (as described in \autoref{sec:SO5}). This idea, however, does not succeed to provide a combinatorial interpretation of the degree of $\textrm{SO}(6)$ (see \autoref{thm:SO6}). Ultimately, the goal of determining the degrees of Stiefel manifolds was achieved using alternative techniques \cite{StiefelManifolds}, leaving the \textit{marvelous} phenomenon witnessed during this process to be shared in the present work.

 \autoref{thm:maintheorem} is used to provide a positive answer to the next unknown instance of a conjecture \cite[Conjecture 7.1]{SOn} concerning the real algebraic geometry of linear algebraic groups: does there exist a zero-dimensional linear slice of $\textrm{SO}(n)$ consisting entirely of real points? The answer is `yes' and the $\textrm{deg}(\textrm{SO}(4)) = 40$ real points are given explicitly in \autoref{sec:witness}. 

Our main motivations for this manuscript are as follows. First and foremost, we wanted to share these examples of $\textrm{HSO}(4)$, and its generalization $\SO^\star(5)$ with the mathematical community;  we could not find them in the literature and they pertain to mathematical objects of broad general interest. Secondly, we hope that our work will encourage investigation of the intrinsic geometric reason for why such decomposable slices exist for $\textrm{SO}(4)$ and $\textrm{SO}(5)$ and whether the phenomenon persists in some form for $n>5$. Finally, we hope this serves as a motivating example to real algebraic geometers, showcasing the heuristic that structured combinatorial decompositions can make real algebro-geometric searches feasible.

\subsection*{Acknowledgements}
We are thankful to Michael Joswig for his helpful input regarding the polytope $\mathcal P$. The initial discoveries in this paper were obtained during the $2018$ ICERM semester on \emph{Nonlinear Algebra} (NSF DMS-1439786).
TB is currently supported by an NSERC Discovery Grant (RGPIN-2023-03551).

\section{The fourteen surfaces of hollow $\textrm{SO}(4)$}
\label{sec:surfaces}
We describe the components of $\textrm{HSO}(4)$ and give the bijection with the facets of the cuboctahedron.

\subsection{Eight Spheres} We define eight spheres in $\mathbb{R}^{16}$, all of which are parametrized by points $(x,y,z)$ on the standard unit sphere $\mathbb{S}^2 \subset \mathbb{R}^3$. One sphere, denoted $S_{+++}^+$, is the image of the map 
\begin{equation}
\label{eq:Spppp}\mydef{S_{+++}^+} \quad \quad \quad \quad \quad 
\mathbb{S}^2 \ni (x,y,z) \mapsto {\tiny \begin{bmatrix} 0 & x & -y & z \\ x & 0 & z & y \\ y & z & 0 & -x \\ z & -y & -x & 0 \end{bmatrix}}.
\end{equation}
This parametrization is summarized by the  diagram
\begin{figure}[!htpb]
\includegraphics[scale=1]{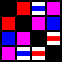} \quad   \raisebox{20pt}{ \quad where \quad $({\color{red}{\blacksquare}},{\color{blue}{\blacksquare}},{\color{diagrammagenta}{\blacksquare}}) \in \mathbb{S}^2$, \quad $\blacksquare=0$, \quad $({\color{red}{\rect}},{\color{blue}{\rect}},{\color{diagrammagenta}{\rect}}) = -({\color{red}{\blacksquare}},{\color{blue}{\blacksquare}},{\color{diagrammagenta}{\blacksquare}})$.}
\end{figure}

\noindent The variety $S_{+++}^+$ is an irreducible surface of degree $2$ and by inspection, $S_{+++}^+ \subseteq \textrm{HSO}(4)$. 

There are seven other components, each indexed by an even $4$-signature: \[S_{-++}^-, S_{+-+}^-, S_{++-}^-,{S_{--+}^+},S_{-+-}^+,S_{+--}^+,S_{---}^-.\]
The lower three signs correspond to negation of columns $2,3,$ and $4$ of \eqref{eq:Spppp}, respectively, whereas the top sign indicates a negation of the first row. This top sign may be viewed as notational redundancy since its value must be chosen to make the determinant one instead of negative one. For example, the component $S_{++-}^-$ is obtained via $S_{+++}^+$ by negating the fourth column and first row:

\begin{figure}[!htpb]
\raisebox{20pt}{$ \mydef{S_{++-}^-} \quad \quad \quad  \quad 
\mathbb{S}^2 \ni (x,y,z) \mapsto {\small \begin{bmatrix} 0 & -x & y & z \\ x & 0 & z & -y \\ y & z & 0 & x \\ z & -y & -x & 0 \end{bmatrix}}$ }\quad \quad \includegraphics[scale=1]{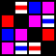}.
\end{figure}

\subsection{Six Tori} There are three ways to partition a hollow $4 \times 4$ matrix into two $2 \times 2$ blocks:
\begin{equation}
\label{eq:blockpatterns}{\tiny
T^{(12)(34)} = 
\begin{bmatrix} 0 & 0 & \Delta & \Delta \\
0 & 0 & \Delta & \Delta \\
\square & \square & 0 & 0 \\
\square & \square & 0 & 0 
\end{bmatrix}, \quad \quad 
T^{(13)(24)} = 
\begin{bmatrix} 0 & \square &0   & \square \\
\Delta & 0 & \Delta & 0 \\
0 & \square & 0 & \square \\
\Delta & 0 & \Delta & 0 
\end{bmatrix}, \quad \quad 
T^{(14)(23)} = 
\begin{bmatrix} 0 & \square &\square   & 0 \\
\Delta & 0 & 0 & \Delta \\
\Delta & 0 & 0 & \Delta \\
0 & \square & \square & 0 
\end{bmatrix}}.
\end{equation}
Each $2 \times 2$ submatrix marked by either $\Delta$ or $\square$ can hold a copy of an orthogonal $2 \times 2$ matrix of common determinant $\pm 1$.  This gives explicit parametrizations for six tori,
\[T_{+}^{(12)(34)},T_{-}^{(12)(34)},T_{+}^{(13)(24)},T_{-}^{(13)(24)},T_{+}^{(14)(23)},T_{-}^{(14)(23)},\] via a copy of $\SO(2) \times \SO(2) \cong \mathbb{S}^1 \times \mathbb{S}^1$.  For example $T_{+}^{(12)(34)}$ is parametrized as
\begin{center}
$
\mydef{T^{(12)(34)}_{+}}    \quad   \quad   \quad   \quad \mathbb{S}^1 \times \mathbb{S}^1 \ni ((x,y),(z,w)) \mapsto {\tiny \begin{bmatrix}
0 & 0 & z & -w \\
0 & 0 & w & z \\
x & -y & 0 & 0 \\
y & x & 0 & 0 \end{bmatrix} }$
\end{center}

\noindent and is summarized diagrammatically as
\begin{center}
\includegraphics[scale=0.8]{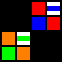} \quad \quad  \raisebox{18pt}{ \text{where} \quad \quad ${\small{\begin{array}{c}({\color{orange}{\blacksquare}},{\color{green}{\blacksquare}}),({\color{red}{\blacksquare}},{\color{blue}{\blacksquare}}) \in \mathbb{S}^1, \quad \blacksquare = 0 \\ ({\color{orange}{\rect}},{\color{green}{\rect}},{\color{red}{\rect}},{\color{blue}{\rect}})=
-({\color{orange}{\blacksquare}},{\color{green}{\blacksquare}},{\color{red}{\blacksquare}},{\color{blue}{\blacksquare}}).
\end{array}}}$}
\end{center}

\noindent The other five tori have similar diagrammatic representations. Each torus has degree four. Since $\textrm{deg}(\SO(4))=40$ \cite{SOn} we have that
\[
\textrm{deg}(\SO(4)) = 40 = 8 \cdot 2 + 6 \cdot 4 = 8\cdot\left(\deg(S_{\pm \pm \pm}^\pm)\right) + 6 \cdot \left(\deg\left(T^{(ab)(cd))}_{\pm}\right)\right)  .
\]
The orthogonal group $\SO(4)$ is a smooth variety. Therefore, linear sections with the expected codimension cannot have degree higher than $\deg \SO(4)$; see, e.g., \cite[Corollary 2.5]{3264}. This guarantees that the $14$ surfaces we have exhibited comprise all components of $\textrm{HSO}(4)$.

Now that all components of $\textrm{HSO}(4)$ have been identified, we give the bijection of \autoref{thm:maintheorem} between the $14$ facets of the cuboctahedron $\mathcal C$ and these $14$ components pictorially in \autoref{fig:CuboctahedronFacets}. 

\begin{figure}[!htpb]
\includegraphics[scale=0.48]{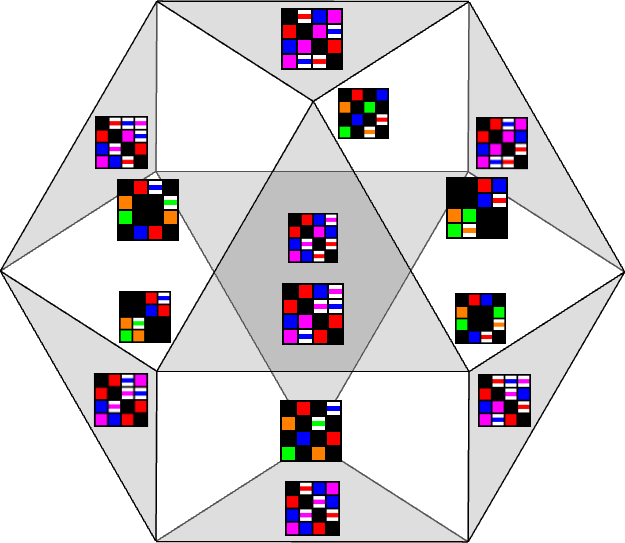}
\caption{The cuboctahedron with facets labeled by the diagrams identifying components of $\textrm{HSO}(4)$. This is the relevant bijection for \autoref{thm:maintheorem}.}
\label{fig:CuboctahedronFacets}
\end{figure}

\section{Twenty-four curves and twelve pairs of points}
\subsection{Twenty-four circles}
\label{secsec:curves}
The $14$ surfaces of $\HSO(4)$ intersect in dimension one according to \autoref{tab:incidence}.
\begin{table}[!htpb]
$${\footnotesize{\begin{array}{|c||c|c|c|c|c|c|}  \hline 
& T^{(12)(34)}_+ & T^{(12)(34)}_- & T^{(13)(24)}_+ & T^{(13)(24)}_- & T^{(14)(23)}_+ & T^{(14)(23)}_- \\ \hline \hline 
S_{+++}^+ & & \times & & \times  & &  \times \\
S_{-++}^- & \times  & & &  \times & &  \times \\
S_{+-+}^- & & \times  &  \times & & &  \times \\
S_{++-}^- & &  \times & &  \times &  \times & \\
S_{--+}^+ &  \times & &  \times & & & \times  \\
S_{-+-}^+ &  \times & & & \times  &  \times & \\
S_{+--}^+ & & \times  & \times  & &  \times & \\
S_{---}^- & \times  & & \times  & & \times  & \\ \hline 
\end{array}}}$$
\caption{The incidence matrix of surface components of $\HSO(4)$ that intersect in a curve.}
\label{tab:incidence}
\end{table}
In particular, the tori never intersect each other in dimension one, nor do the spheres. Since the action of even column/row negation acts transitively on the spheres, we focus on analyzing the first row of the table, corresponding to $S_{+++}^+$. Imposing the three $2+2$ block patterns of \eqref{eq:blockpatterns} on the matrix \eqref{eq:Spppp} enforces $x,y,$ or $z$ (respectively) equal to zero, resulting in the matrix formats
\begin{equation}
\label{eq:circlematrixformats}
{\small{T^{(12)(34)}: {\tiny \begin{bmatrix}
0 & 0 & -y & z \\ 
0 & 0 & z & y \\
y & z & 0 & 0 \\
z & -y & 0 & 0 
\end{bmatrix}}, \quad \,\,\,  \,\,\, 
T^{(13)(24)}: {\tiny \begin{bmatrix}
0 & x & 0 & z \\ 
x & 0 & z & 0 \\
0 & z & 0 & -x \\
z &0 & -x & 0 
\end{bmatrix}}, \quad  \,\,\,  \,\,\,  
T^{(14)(23)}: {\tiny \begin{bmatrix}
0 & x & -y & 0 \\ 
x & 0 & 0 & y \\
y & 0 & 0 & -x \\
0 & -y & -x & 0 
\end{bmatrix}}}}.
\end{equation}
Each parametrization in \eqref{eq:circlematrixformats} gives the diagonal embedding $\mathbb{S}^1 \hookrightarrow \mathbb{S}^1 \times \mathbb{S}^1 \cong T^{(ab)(cd)}_{\pm}$. The determinants of each $2 \times 2$ block of each of these circles are observed to be $-1$, verifying \autoref{tab:incidence}.

The incidence described in \autoref{tab:incidence} of sphere-torus intersections of dimension one coincides with the incidence of triangle-quadrilateral intersections along edges of the cuboctahedron as in \autoref{fig:CuboctahedronFacets}. In particular, the $14$ surface components of $\HSO(4)$ intersect in dimension one along $24$ circles.

\subsection{Twelve pairs of fixed point free signed permutation matrices}
\label{secsec:points}
Having treated the two-dimensional components of $\HSO(4)$ and their one-dimensional intersections, we now turn toward their zero-dimensional intersections. As one would hope, these are encoded in the facet-vertex incidence of the cuboctahedron. The analysis of the previous section shows that sphere-torus pairs which do not intersect in curves, do not intersect at all. Thus, the only remaining possible non-empty intersections of components are between sphere-sphere pairs or torus-torus pairs.

The possible zero patterns occurring by intersecting the torus block patterns \eqref{eq:blockpatterns} are 
\begin{equation*}
\begin{tikzpicture}[scale=0.8]
\node[] (A) at (0, 0) {$  \hspace{-30pt}{\color{white}{P^{(13)(24)}=}} {\tiny{\begin{bmatrix} 0 & 0 & \star  & 0 \\
0 & 0 & 0 & \star  \\
\star  & 0 & 0 & 0 \\
0 & \star  & 0 & 0 \end{bmatrix}}}$
};
\node[] (B) at (6, 0) {${\tiny{ \begin{bmatrix} 0 & 0 & 0  & \star \\
0 & 0 & \star & 0 \\
0  & \star & 0 & 0 \\
\star & 0  & 0 & 0 \end{bmatrix}}} {\color{white}{=P^{(14)(23)}}} $
};
\node[] (C) at (3, 3.46) {$ \hspace{-60pt}{\color{white}{P^{(12)(34)} =}} {\tiny{\begin{bmatrix} 0 & \star & 0 & 0 \\
\star & 0 & 0 & 0 \\
0 & 0 & 0 & \star \\
0 & 0 & \star & 0 \end{bmatrix}}}$
};

\draw (A) -- (B) node[midway, above]{$T^{(12)(34)}$};
\draw (B) -- (C) node[midway, right]{$ \hspace{10pt}T^{(13)(24)}$};
\draw (C) -- (A) node[midway, left]{$T^{(14)(23)}$};
\end{tikzpicture}
\end{equation*}
\noindent Matrices of these patterns which appear in $\HSO(4)$ are those which have the $\star$ symbols replaced with an even number of $1$'s and $-1$'s. They may be thought of as copies of $\mathbb{S}^0 \cong \mathrm{O}(1)$ parametrized via the illustration in \autoref{fig:CuboctahedronVertices}. There are $3  \cdot 8 \cdot \frac 1 2=12$ of them. These comprise the non-empty sphere-sphere intersections as well, and they appear as intersections in agreement with \autoref{fig:CuboctahedronFacets}. This fact completes the proof of \autoref{thm:maintheorem}.

\begin{figure}[!htpb]
\includegraphics[scale=0.47]{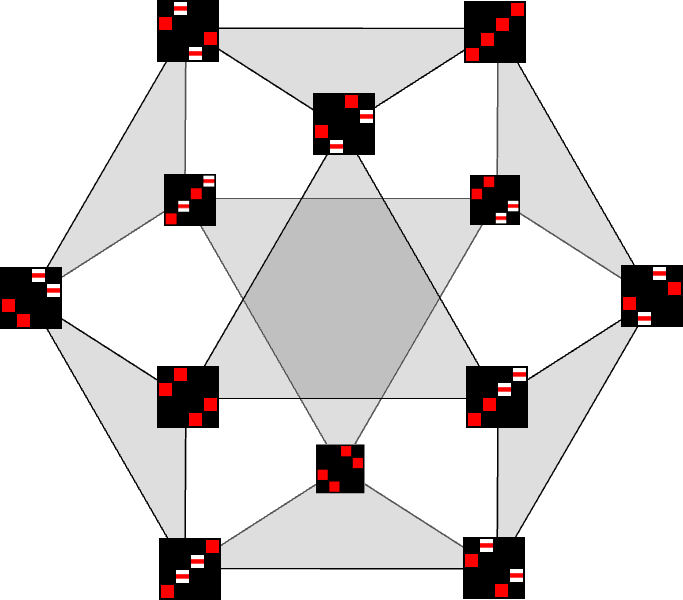}
\caption{The vertices of $\mathcal C$ correspond to zero-dimensional intersections of components of $\textrm{HSO}(4)$. Each represents a signed double transposition and its negative.}
\label{fig:CuboctahedronVertices}
\end{figure}

Each zero-dimensional intersection is a signed permutation matrix, paired with its negative. Since the matrix must be hollow, these permutations have no fixed points; however, not all fixed point free permutations arise. Only the double transpositions show up, whereas the six $4$-cycles do not.  
\section{A totally real witness set}
\label{sec:witness}
The decomposition of $\textrm{HSO}(4)$ is relevant to real numerical algebraic geometry \cite{NAG} and semidefinite programming. In numerical algebraic geometry, varieties are represented on a computer as \textit{witness sets}: zero-dimensional intersections with generic linear spaces of complementary dimension. If all such intersection points are real, we say the witness set is \textit{totally real}. An open problem \cite[Conjecture 7.1]{SOn} is to determine if $\SO(n)$ admits a totally real witness set for all $n$. This conjecture relates to a low-rank semidefinite programming optimization problem, as described in \cite{SOn,LowRankSDP}. 
 For an irreducible variety $X \subseteq \mathbb{C}^n$ and generic hyperplane $\mathcal H$, Bertini-like theorems (e.g. \cite[Theorem 6.3]{BertiniBook}, \cite[Section 11]{HarrisAG}) tell us what to expect of $X \cap \mathcal H$:
\begin{enumerate}
\item (Degree) The degree of $\mathcal H \cap X$ is the degree of $X$.  
\item (Dimension) The dimension of $\mathcal H \cap X$ is $\dim(X)-1$.  
\item (Irreducible) If $\dim(X)>1$ then $\mathcal H \cap X$ is irreducible.  
\end{enumerate}
 We refer to a hyperplane section as \textit{degree-generic}, \textit{dimension-generic}, and \textit{irreducible-generic} accordingly. These definitions extend to linear sections of higher codimension. In the context of numerical algebraic geometry and witness sets, we require that $X \cap \mathcal L$ is obtained via a sequence of hyperplane sections which are both degree-generic and dimension-generic at each step. The previous section shows that the hollow hyperplanes $\{x_{i,i}=0\}_{i=1}^4$ work as the first four slices for $\textrm{SO}(4)$. Note that irreducible-genericity fails spectacularly.

The benefit of studying structured hyperplane sections of a variety is that, often,  such sections are easy to control and understand. The counterpoint to this benefit is that the more special a linear section is, the less likely it is to be degree, dimension, or irreducible-generic. Incredibly, the hollow linear space of codimension $4$ strikes a perfect balance for $\SO(4)$ between genericity and speciality. It allows us to search through the space of codimension-two linear sections of $\HSO(4)$ for a pair of hyperplanes which cut $\HSO(4)$ down to $40$ \textit{real} points. Here are two:
\begin{align*}{\tiny{
H_1: \begin{bmatrix}
0 & 1 & -1 & 0 \\
-1 & 0 & 0 & -1 \\ 
1 & 0 & 0 & 1 \\
0 & -1 & 1 & 0
\end{bmatrix} \cdot X = 0 \quad \quad \quad \quad \quad \quad 
H_2: \begin{bmatrix}
0 & 1 & 0 & -1 \\
-1 & 0 & -1 & 0 \\ 
0 & -1 & 0 & 1 \\
1 & 0 & 1 & 0
\end{bmatrix} \cdot X =0 }},
\end{align*}
where $\cdot$ denotes the entry-wise inner product. The $40$ real intersection points are 
\[
{\scriptsize {
\begin{bmatrix}
0 & \text{-}a & a & a \\
\text{-}a & 0 & \text{-}a & a \\
a & a & 0 & a \\
\text{-}a & a & a & 0
\end{bmatrix}\hspace{-5pt}
\begin{bmatrix}
0 & a & \text{-}a & a \\
a & 0 & a & a \\
\text{-}a & \text{-}a & 0 & a \\
\text{-}a & a & a & 0
\end{bmatrix}\hspace{-5pt}
\begin{bmatrix}
0 & \text{-}a & a & a \\
\text{-}a & 0 & a & \text{-}a \\
\text{-}a & a & 0 & a \\
a & a & a & 0
\end{bmatrix}\hspace{-5pt}
\begin{bmatrix}
0 & a & \text{-}a & a \\
\text{-}a & 0 & a & a \\
\text{-}a & a & 0 & \text{-}a \\
a & a & a & 0
\end{bmatrix} \hspace{-5pt}
\begin{bmatrix}
0 & a & a & \text{-}a \\
a & 0 & a & a \\
\text{-}a & a & 0 & a \\
\text{-}a & \text{-}a & a & 0
\end{bmatrix}\hspace{-5pt}
\begin{bmatrix}
0 & a & a & a \\
\text{-}a & 0 & \text{-}a & a \\
\text{-}a & a & 0 & \text{-}a \\
\text{-}a & \text{-}a & a & 0
\end{bmatrix}\hspace{-5pt}
\begin{bmatrix}
0 & \text{\text{-}}a & \text{\text{-}}a & \text{\text{-}}a \\
a & 0 & \text{\text{-}}a & a \\
a & a & 0 & \text{\text{-}}a \\
a & \text{\text{-}}a & a & 0
\end{bmatrix}\hspace{-5pt}
\begin{bmatrix}
0 & \text{\text{-}}a & \text{\text{-}}a & a \\
a & 0 & \text{\text{-}}a & \text{\text{-}}a \\
\text{\text{-}}a & \text{\text{-}}a & 0 & \text{\text{-}}a \\
a & \text{\text{-}}a & a & 0
\end{bmatrix}
}}
\]
\[{\scriptsize{
\begin{bmatrix}
0 & 0 & \text{-}1 & 0 \\
0 & 0 & 0 & 1 \\
0 & \text{-}1 & 0 & 0 \\
\text{-}1 & 0 & 0 & 0
\end{bmatrix}\hspace{-1pt}
\begin{bmatrix}
0 & 0 & 1 & 0 \\
0 & 0 & 0 & \text{-}1 \\
0 & \text{-}1 & 0 & 0 \\
\text{-}1 & 0 & 0 & 0
\end{bmatrix}\hspace{-1pt}
\begin{bmatrix}
0 & \text{-}1 & 0 & 0 \\
0 & 0 & \text{-}1 & 0 \\
0 & 0 & 0 & 1 \\
\text{-}1 & 0 & 0 & 0
\end{bmatrix}\hspace{-1pt}
\begin{bmatrix}
0 & 1 & 0 & 0 \\
0 & 0 & \text{-}1 & 0 \\
0 & 0 & 0 & \text{-}1 \\
\text{-}1 & 0 & 0 & 0
\end{bmatrix}\hspace{-1pt}
\begin{bmatrix}
0 & 0 & \text{-}1 & 0 \\
0 & 0 & 0 & 1 \\
0 & 1 & 0 & 0 \\
1 & 0 & 0 & 0
\end{bmatrix} \hspace{-1pt}
\begin{bmatrix}
0 & 0 & 1 & 0 \\
0 & 0 & 0 & \text{-}1 \\
0 & 1 & 0 & 0 \\
1 & 0 & 0 & 0
\end{bmatrix}\hspace{-1pt}
\begin{bmatrix}
0 & \text{-}1 & 0 & 0 \\
0 & 0 & 1 & 0 \\
0 & 0 & 0 & 1 \\
1 & 0 & 0 & 0
\end{bmatrix}\hspace{-1pt}
\begin{bmatrix}
0 & 1 & 0 & 0 \\
0 & 0 & 1 & 0 \\
0 & 0 & 0 & \text{-}1 \\
1 & 0 & 0 & 0
\end{bmatrix}
}}
\]
\[{\scriptsize{
\begin{bmatrix}
0 & 0 & 0 & \text{-}1 \\
0 & 0 & 1 & 0 \\
1 & 0 & 0 & 0 \\
0 & 1 & 0 & 0
\end{bmatrix}\hspace{-1pt}
\begin{bmatrix}
0 & 0 & 0 & 1 \\
0 & 0 & \text{-}1 & 0 \\
1 & 0 & 0 & 0 \\
0 & 1 & 0 & 0
\end{bmatrix}\hspace{-1pt}
\begin{bmatrix}
0 & 0 & \text{-}1 & 0 \\
1 & 0 & 0 & 0 \\
0 & 0 & 0 & 1 \\
0 & 1 & 0 & 0
\end{bmatrix} \hspace{-1pt}
\begin{bmatrix}
0 & 0 & \text{-}1 & 0 \\
\text{-}1 & 0 & 0 & 0 \\
0 & 0 & 0 & \text{-}1 \\
0 & 1 & 0 & 0
\end{bmatrix}\hspace{-1pt}
\begin{bmatrix}
0 & 0 & 0 & \text{-}1 \\
0 & 0 & 1 & 0 \\
\text{-}1 & 0 & 0 & 0 \\
0 & \text{-}1 & 0 & 0
\end{bmatrix}\hspace{-1pt}
\begin{bmatrix}
0 & 0 & 0 & 1 \\
0 & 0 & \text{-}1 & 0 \\
\text{-}1 & 0 & 0 & 0 \\
0 & \text{-}1 & 0 & 0
\end{bmatrix}\hspace{-1pt}
\begin{bmatrix}
0 & 0 & 1 & 0 \\
1 & 0 & 0 & 0 \\
0 & 0 & 0 & 1 \\
0 & \text{-}1 & 0 & 0
\end{bmatrix}\hspace{-1pt}
\begin{bmatrix}
0 & 0 & 1 & 0 \\
\text{-}1 & 0 & 0 & 0 \\
0 & 0 & 0 & \text{-}1 \\
0 & \text{-}1 & 0 & 0
\end{bmatrix}
}}
\]
\[{\scriptsize{
\begin{bmatrix}
0 & \text{-}1 & 0 & 0 \\
0 & 0 & 0 & \text{-}1 \\
\text{-}1 & 0 & 0 & 0 \\
0 & 0 & 1 & 0
\end{bmatrix} \hspace{-1pt}
\begin{bmatrix}
0 & \text{-}1 & 0 & 0 \\
0 & 0 & 0 & 1 \\
1 & 0 & 0 & 0 \\
0 & 0 & 1 & 0
\end{bmatrix}\hspace{-1pt}
\begin{bmatrix}
0 & 0 & 0 & \text{-}1 \\
1 & 0 & 0 & 0 \\
0 & 1 & 0 & 0 \\
0 & 0 & 1 & 0
\end{bmatrix}\hspace{-1pt}
\begin{bmatrix}
0 & 0 & 0 & 1 \\
1 & 0 & 0 & 0 \\
0 & \text{-}1 & 0 & 0 \\
0 & 0 & 1 & 0
\end{bmatrix}\hspace{-1pt}
\begin{bmatrix}
0 & 1 & 0 & 0 \\
0 & 0 & 0 & \text{-}1 \\
\text{-}1 & 0 & 0 & 0 \\
0 & 0 & \text{-}1 & 0
\end{bmatrix}\hspace{-1pt}
\begin{bmatrix}
0 & 1 & 0 & 0 \\
0 & 0 & 0 & 1 \\
1 & 0 & 0 & 0 \\
0 & 0 & \text{-}1 & 0
\end{bmatrix}\hspace{-1pt}
\begin{bmatrix}
0 & 0 & 0 & \text{-}1 \\
\text{-}1 & 0 & 0 & 0 \\
0 & 1 & 0 & 0 \\
0 & 0 & \text{-}1 & 0
\end{bmatrix} \hspace{-1pt}
\begin{bmatrix}
0 & 0 & 0 & 1 \\
\text{-}1 & 0 & 0 & 0 \\
0 & \text{-}1 & 0 & 0 \\
0 & 0 & \text{-}1 & 0
\end{bmatrix}.}}
\]
where $a = \pm \sqrt{1/3}$. Note that each matrix in the first row counts for two points, so $40=2\cdot 8 + 8+8+8$. This resolves \cite[Conjecture 7.1]{SOn} for $n=4$. We state our result for completeness.
\begin{theorem}
There exists a witness set of $\textrm{SO}(4)$ consisting of $40$ real points.
\end{theorem}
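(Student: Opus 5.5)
The plan is to take the linear space $\mathcal L = \{x_{11}=x_{22}=x_{33}=x_{44}=0\} \cap H_1 \cap H_2$, which has codimension $6 = \dim \SO(4)$, and prove three facts. First, $\SO(4)\cap\mathcal L$ consists of exactly the $40$ real points listed above. Second, each of these points is a reduced (transverse) intersection point. Third, $\mathcal L$ is obtained by a chain of degree- and dimension-generic hyperplane sections.

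Once these hold, the count $40=\deg \SO(4)$ \cite{SOn} together with transversality shows that $\mathcal L$ behaves like a generic linear space. Indeed, the intersection with a nearby generic real linear space of the same codimension stays real, with $40$ points, by the implicit function theorem at each transverse real point. This gives a witness set in the strict sense.

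For the point count I will use \autoref{thm:maintheorem}. Since $\SO(4)\cap\mathcal L = \HSO(4)\cap H_1\cap H_2$, it suffices to intersect each of the $14$ components with $H_1\cap H_2$ through its explicit parametrization.

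\emph{Spheres.} On a sphere $S^{\pm}_{\pm\pm\pm}$ the parametrization is linear in $(x,y,z)\in\mathbb S^2$. So $H_1,H_2$ impose two linear equations on $(x,y,z)$, which generically cut out a line through the origin. That line meets $\mathbb S^2$ in exactly two real antipodal points, giving $8\cdot 2=16$ points. I will check that the two linear forms are independent on each of the eight spheres. This should produce the first-row matrices, where all six nonzero pairs have entries $\pm a$ and $a^2=1/3$.

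\emph{Tori.} On a torus $T^{(ab)(cd)}_\pm$ the entries are linear in each circle variable separately. Hence $H_1,H_2$ become two bilinear equations in $(x,y)\in\mathbb S^1$ and $(z,w)\in\mathbb S^1$. Each torus should contribute $4$ points, matching its degree, and together these give $24$ signed permutation matrices. The listed solutions are all fixed-point-free signed $4$-cycles, so none of them lies in any pairwise intersection of components from \S\ref{secsec:curves}--\ref{secsec:points}. Those intersections contain only double transpositions or the circles of \eqref{eq:circlematrixformats}. This rules out double counting, and the total is $16+24=40$.

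For reducedness, each point is a transverse intersection of a smooth component with $H_1\cap H_2$ in its tangent plane. I will verify this by a Jacobian computation on the parametrization, which amounts to $2\times 2$ determinants. A reduced count of $40$ then forces degree-genericity of the whole chain, since degree cannot exceed $40$ by \cite[Corollary 2.5]{3264}. I expect the main obstacle to be the tori. Solving the bilinear systems and confirming that each has exactly four real, nondegenerate solutions, and no solution on the boundary circles, is the delicate enumeration. I would first exploit the symmetry under row and column sign changes that preserves $H_1,H_2$ up to sign, in order to reduce to one torus per block type.
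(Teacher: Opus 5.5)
Your proposal is correct and follows the same route as the paper. You use the decomposition of $\HSO(4)$ from \autoref{thm:maintheorem} to reduce $\SO(4)\cap\mathcal L$ to intersecting the $14$ components with $H_1\cap H_2$: each sphere contributes two antipodal points (the first-row matrices with $a=\pm\sqrt{1/3}$), and each torus contributes four signed $4$-cycles (on $T^{(12)(34)}_+$, for instance, $H_1$ and $H_2$ become $z=0$ and $y=0$), giving $16+24=40$ real points.

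Your reducedness and perturbation arguments go beyond what the paper writes. The separate Jacobian check is not needed, though: once you have $40$ distinct points in a codimension-$6$ linear section of the degree-$40$ variety $\SO(4)$, the degree is exhausted, so there are no further complex points and each point has intersection multiplicity one.
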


This slice was found through the desire for the intersection points to hold combinatorial meaning. Specifically, we considered hyperplanes containing fixed point free signed permutation matrices, other than those in \autoref{fig:CuboctahedronVertices}. The slice above was the most elegant one we found. 

\section{Extending to $\SO(5)$ }
\label{sec:SO5}
One may hope that the hollow version of $\SO(n)$ decomposes in this beautiful way for $n>4$. This is not the case: the variety of hollow $5 \times 5$ special orthogonal matrices is irreducible. Instead, we seek a different zero pattern for $\textrm{SO}(5)$. For $n=4$ the hollow pattern cuts the dimension of $\textrm{SO}(4)$ from six to two. This is the rank of $\textrm{SO}(4)$, that is, the dimension of its maximal torus. 
By an exhaustive calculation of all zero patterns with  $\dim(\SO(5)) - \textrm{rank}(\SO(5)) = 10-2=8$ zeros, modulo symmetry, we found there is a \textit{unique} dimension-generic and degree-generic pattern:
\begin{align} 
\label{eq:5zeropattern}
\raisebox{-28pt}{\includegraphics[scale=0.4]{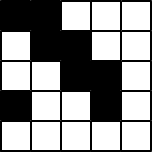}.\tag{$\star$}}
\end{align}
In fact, the exhaustive search shows that there is no other such pattern over the complex numbers either. We write  $\SO^\star(5)$ for the set of $5 \times 5$ special orthogonal matrices with the zero pattern \eqref{eq:5zeropattern}. Just like $\textrm{HSO}(4)$, the space $\SO^\star(5)$ decomposes dramatically.  
We write $\SO^\circ(3)$ for a copy of $\SO(3)$ with a single zero in some entry. 
\begin{theorem}
\label{thm:SO5}The variety $\SO^\star(5) \subseteq \mathbb{R}^{25}$ is the union of $64$ irreducible surfaces: $32$ tori of degree four and $32$ copies of $\SO^\circ(3) \times \SO(1) \times 
\SO(1)$ of degree eight. These $64$ components may be grouped into $16$ quadruples based on their zero pattern. There exists a bijection between these $16$ quadruples and the facets of the polytope $\mathcal P$ (illustrated on the right of \autoref{fig:polytopes}) satisfying the following:
\begin{enumerate}
\item \hspace{-2pt}The eight quadruples of tori correspond to the  quadrilaterals with two degree four vertices.
\item \hspace{-2pt}The eight quadruples of $\SO^\circ(3)\times \SO(1) \times 
\SO(1)$ correspond to the other facets.
\item \hspace{-2pt}Quadruples intersect in curves if and only if their corresponding facets share an edge. 
\item \hspace{-2pt}Quadruples intersect in isolated points if and only if their facets meet in a vertex. 
\item \hspace{-2pt}Curve intersections of components are copies of $\textrm{SO}(2) \times \SO(1) \times \SO(1) \times \SO(1) \cong \mathbb{S}^1$
\item \hspace{-2pt}Point intersections of components are signed permutation matrices of determinant one. 
\end{enumerate}
The relevant bijection is illustrated in \autoref{fig:SO5}. 
\end{theorem}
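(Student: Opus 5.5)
I would follow the same strategy as for \autoref{thm:maintheorem}: exhibit explicit components, and then use a degree count to show there are no others. The zero pattern \eqref{eq:5zeropattern} has $8$ zeros in a $5\times 5$ matrix.

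The first step is to parametrize the components. Each component should arise from a refinement of \eqref{eq:5zeropattern} to a block pattern.
\begin{itemize}
\item \textbf{Tori.} The matrix is a permuted direct sum $\SO(2)\times\SO(2)\times\SO(1)$, up to determinant signs. This gives tori $\mathbb{S}^1\times\mathbb{S}^1$ of degree $4$, exactly as for the tori of $\HSO(4)$.
\item \textbf{Copies of $\SO^\circ(3)\times\SO(1)\times\SO(1)$.} The matrix is a permuted block sum of a $3\times 3$ orthogonal block with one prescribed zero and two $\pm1$ entries. The block $\SO^\circ(3)$ is a hyperplane section of $\SO(3)$, which has degree $8$, so it is a surface of degree $8$. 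It is irreducible because it is a conic bundle over a circle: the column containing the zero is a unit vector in a plane, and the remaining two columns are determined up to a rotation.
\end{itemize}
I would enumerate all refinements of \eqref{eq:5zeropattern} compatible with these block shapes. For each block shape, the sign choices on the $\SO(1)$ factors and on the block determinants, subject to total determinant $1$, give four components. This produces $8$ zero patterns of torus type and $8$ of the other type, hence $32+32$ components. The degrees then add up to
\[
32\cdot 4+32\cdot 8=384=\deg\SO(5).
\]
Since $\SO(5)$ is smooth and the slice has the expected dimension $2$, the cited bound \cite[Corollary 2.5]{3264} implies that these are all the components. Grouping the components by their zero pattern gives the $16$ quadruples.

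Next comes the combinatorics. For each pair of quadruples I would intersect their zero patterns. If the common pattern still contains a $2\times2$ rotation block, with all other entries forced to $\pm1$, the intersection is a circle $\SO(2)\times\SO(1)^3$. If the common pattern forces a signed permutation matrix, the intersection consists of finitely many points, and the determinant condition forces determinant one. If the common pattern is incompatible with orthogonality, the intersection is empty. This mirrors \autoref{secsec:curves} and \autoref{secsec:points}. The symmetry group of \eqref{eq:5zeropattern}, namely simultaneous row and column permutations preserving the pattern together with sign changes, reduces the work to a few representative pairs.

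Finally I would check that the resulting adjacency data matches the face lattice of $\mathcal P$. Pairs meeting in curves should correspond to facets sharing an edge, and pairs meeting only in points to facets sharing only a vertex. The torus quadruples should be exactly the quadrilaterals with two degree-four vertices. One clean way to do this is to compute the vertex–facet incidence directly: vertices of $\mathcal P$ correspond to orbits of point intersections, and I would check that this incidence is isomorphic to that of $\mathcal P$, for example by a graph-isomorphism computation with \cite{Nauty}, or by hand using the symmetry.

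I expect the main obstacle to be the bookkeeping in the first step. One has to show that every torus and $\SO^\circ(3)$ pattern compatible with \eqref{eq:5zeropattern} actually has nonempty real points, and that sign choices on the same pattern really give four distinct components and not fewer. Getting the exact count of $64$ is what allows the degree argument to close. I would handle this by explicitly listing one representative pattern of each type and transporting it by the symmetry group.
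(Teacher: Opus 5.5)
Your proposal is correct and takes essentially the same route as the paper. The paper likewise exhibits the $32$ tori, built from cyclically adjacent $2\times 2$ blocks, and the $32$ copies of $\SO^\circ(3)\times\SO(1)\times\SO(1)$, determined by the positions of the two $\pm1$ entries, with four sign choices each. It closes the list with $32\cdot 4+32\cdot 8=384=\deg\SO(5)$ and smoothness of $\SO(5)$, and reads off the curve and point intersections from shared blocks with sign agreement and from the $18$ signed permutation patterns matched to the face lattice of $\mathcal P$. One minor remark: \eqref{eq:5zeropattern} is also preserved by independent row and column permutations, e.g.\ rows $(2\,4)$ with columns $(1\,2)(3\,4)$, and not only by simultaneous ones, which would shorten your case analysis further.
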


\subsection{The thirty-two tori and their intersections}
Each torus in $\SO^\star(5)$ appears as a $2+2+1$ embedding of $\SO(2) \times \SO(2) \times \SO(1)$ into  \eqref{eq:5zeropattern}. We identify the eight $2 \times 2$ blocks in \eqref{eq:5zeropattern}:
\[
\underbrace{\includegraphics[scale=0.36]{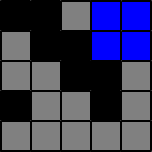}}_{A_1} \quad 
\underbrace{\includegraphics[scale=0.36]{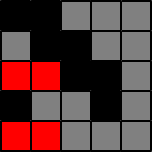}}_{A_2}\quad 
\underbrace{\includegraphics[scale=0.36]{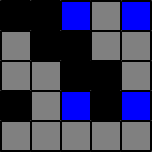}}_{A_3}\quad 
\underbrace{\includegraphics[scale=0.36]{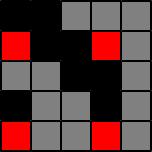}}_{A_4}\quad 
\underbrace{\includegraphics[scale=0.36]{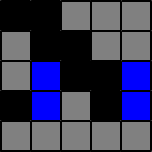}}_{A_5}\quad 
\underbrace{\includegraphics[scale=0.36]{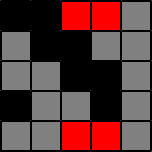}}_{A_6}\quad 
\underbrace{\includegraphics[scale=0.36]{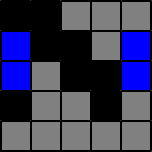}}_{A_7}\quad 
\underbrace{\includegraphics[scale=0.36]{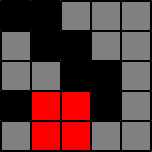}}
_{A_8}\]
To combine two of these $2 \times 2$ blocks and a $1 \times 1$ block into a $2+2+1$ block matrix one must superimpose two cyclically adjacent (modulo $8$) patterns to obtain a copy of $\O(2) \times \O(2) \times \O(1)$.  Fixing a sign choice for each $2 \times 2$ block, we obtain tori, like $T_{12+-}$ which is parametrized as 
\[ ((x_1,x_2),(y_1,y_2)) \in \mathbb{S}^1 \times \mathbb{S}^1 \mapsto
{\footnotesize{\begin{bmatrix}
0 & 0 & 0 & x_1 & -x_2 \\
0 & 0 & 0 & x_2 & x_1 \\
y_1 & y_2 & 0 & 0 & 0 \\
0 & 0 & 1 & 0 & 0 \\
y_2 & -y_1 & 0 & 0 & 0 
\end{bmatrix}}}.
\]  
There are eight possible block patterns, and four sign patterns for each, giving $32$ tori:
\[{{
\begin{array}{cccccccc}
T_{12++} & T_{23++} & T_{34++} & T_{45++} & T_{56++} & T_{67++} & T_{78++} & T_{81++} \\ 
T_{12+-} & T_{23+-} & T_{34+-} & T_{45+-} & T_{56+-} & T_{67+-} & T_{78+-} & T_{81+-} \\ 
T_{12-+} & T_{23-+} & T_{34-+} & T_{45-+} & T_{56-+} & T_{67-+} & T_{78-+} & T_{81-+} \\ 
T_{12--} & T_{23--} & T_{34--} & T_{45--} & T_{56--} & T_{67--} & T_{78--} & T_{81--} \\ 
\end{array}}}.
\]
\subsection{The thirty-two sections of $\SO(3)$ and their intersections}
Each $\O^\circ(3) \times \O(1) \times \O(1)$ pattern appearing in $\SO^\star(5)$ can be uniquely identified by the positions of each $\O(1)$ in \eqref{eq:5zeropattern} since the complement of the columns/rows indexed by two such entries of \eqref{eq:5zeropattern} determines a $3\times 3$ submatrix. For example, consider the pattern induced on \eqref{eq:5zeropattern} when $(1,4)$ and $(2,1)$ are $\pm 1$: 
\[
\includegraphics[scale=0.4]{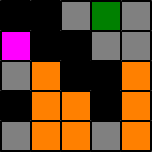}.
\] 
Only the $3 \times 3$ submatrices of \eqref{eq:5zeropattern} which have $8$ non-zero entries correspond to valid $3+1+1 = 5$ block patterns in \eqref{eq:5zeropattern}. As was the case for the tori in $\SO^\star(5)$, such a pattern, together with two signs, identify a component of $\SO^\star(5)$ of this form. We will denote such components like $C_{(1,4)(2,1)++}$ to indicate the indices of $\pm 1$'s and their signs. This gives $32$ components of degree $8 = \textrm{deg}(\SO(3))$:
\[{\footnotesize{
\begin{array}{cccc}
C_{(1,4)(2,1)++} & C_{(2,1)(3,2)++} & C_{(3,2)(4,3)++} & C_{(4,3)(1,4)++} \\ 
C_{(1,4)(2,1)+-} & C_{(2,1)(3,2)+-} & C_{(3,2)(4,3)+-} & C_{(4,3)(1,4)+-} \\ 
C_{(1,4)(2,1)-+} & C_{(2,1)(3,2)-+} & C_{(3,2)(4,3)-+} & C_{(4,3)(1,4)-+} \\ 
C_{(1,4)(2,1)--} & C_{(2,1)(3,2)--} & C_{(3,2)(4,3)--} & C_{(4,3)(1,4)--} \\ 
\end{array} 
\begin{array}{cccc}
C_{(1,3)(2,4)++} & C_{(2,4)(3,1)++} & C_{(3,1)(4,2)++} & C_{(4,2)(1,3)++} \\ 
C_{(1,3)(2,4)+-} & C_{(2,4)(3,1)+-} & C_{(3,1)(4,2)+-} & C_{(4,2)(1,3)+-} \\ 
C_{(1,3)(2,4)-+} & C_{(2,4)(3,1)-+} & C_{(3,1)(4,2)-+} & C_{(4,2)(1,3)-+} \\ 
C_{(1,3)(2,4)--} & C_{(2,4)(3,1)--} & C_{(3,1)(4,2)--} & C_{(4,2)(1,3)--} \\ 
\end{array}}}.
\]
We have now found all $64$ components of $\SO^\star(5)$ since
\[
32 \cdot 4 + 32 \cdot 8 = 384 =\textrm{deg}(\SO(5)).
\] The bijection of \autoref{thm:SO5} is given in \autoref{fig:SO5}.

\begin{figure}[!htpb]
 \includegraphics[scale=0.45]{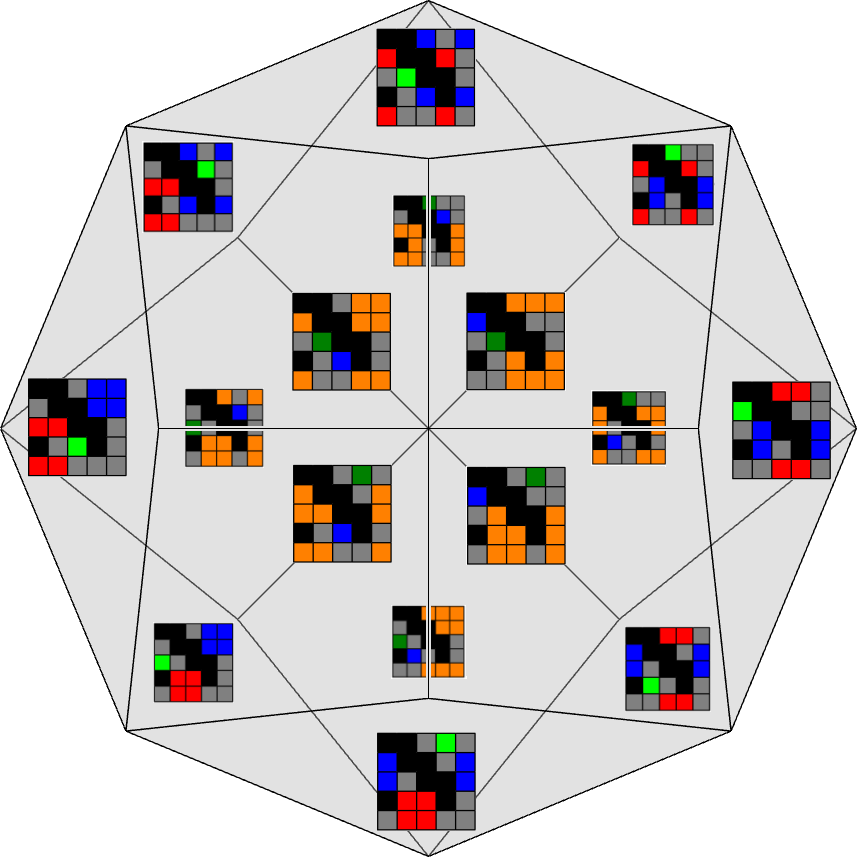}
 \caption{The polytope $\mathcal P$ with facets labeled by diagrams identifying quadruples of components of $\SO^\star(5)$ with the same zero pattern, giving the relevant bijection for \autoref{thm:SO5}.}
 \label{fig:SO5}
\end{figure}

\subsection{Two hundred fifty-six circles} 

The curve intersection incidence matrix of the $32$ tori is
\[
{\footnotesize{\begin{bmatrix}
0 & \mathcal I  & 0 & 0 & 0 & 0 & 0 & \mathcal I  \\ 
\mathcal I  & 0 & \mathcal I & 0 & 0 & 0 & 0 & 0   \\ 
0 & \mathcal I  & 0 & \mathcal I & 0 & 0 & 0 & 0    \\ 
0 & 0 & \mathcal I  & 0 & \mathcal I & 0 & 0 & 0    \\ 
0 & 0 & 0  & \mathcal I  & 0 & \mathcal I & 0 & 0    \\ 
0 & 0 & 0  & 0  & \mathcal I  & 0 & \mathcal I & 0    \\ 
0 & 0 & 0 & 0  & 0  & \mathcal I  & 0 & \mathcal I   \\ 
\mathcal I   & 0 & 0 & 0 & 0  & 0  & \mathcal I  & 0   \\ 
\end{bmatrix}} = \vcenter{\hbox{\includegraphics[scale=0.3]{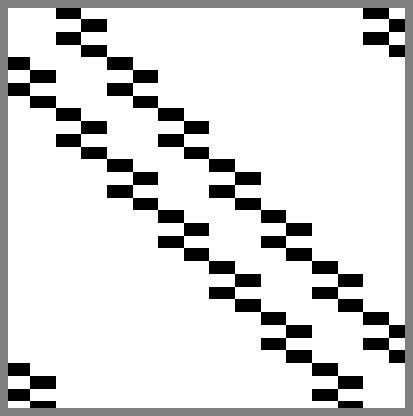}}}} \quad \quad \text{ where } \quad \quad \mathcal I = {\footnotesize{\begin{bmatrix} 1 & 1 & 0 & 0 \\
0 & 0 & 1 & 1 \\
1 & 1 & 0 & 0 \\
0 & 0 & 1 & 1 \end{bmatrix}}}.
\] Essentially, these components intersect exactly when they share a $2 \times 2$ nonzero block and there is a sign agreement. Thus, all intersections are copies of $\SO(2) \times \SO(1) \times \SO(1) \times \SO(1)$.

Two $\SO^\circ(3) \times \SO(1) \times \SO(1)$ components intersect in a curve if and only if they share a $1 \times 1$ block and the respective signs of that entry agree. The intersection incidence matrix is
\[
{\footnotesize{\begin{bmatrix}
0 & \mathcal I  & 0 &   \mathcal I & 0 & 0 & 0 & 0  \\ 
\mathcal I  & 0 & \mathcal I & 0 & 0 & 0 & 0 & 0   \\ 
0 & \mathcal I  & 0 & \mathcal I & 0 & 0 & 0 & 0    \\ 
\mathcal I & 0 & \mathcal I  & 0 & 0 & 0 & 0 & 0    \\ 
0 & 0 & 0  &0 & 0 & \mathcal I & 0 &  \mathcal I     \\ 
0 & 0 & 0  & 0  & \mathcal I  & 0 & \mathcal I & 0    \\ 
0 & 0 & 0 & 0  & 0  & \mathcal I  & 0 & \mathcal I   \\ 
0   & 0 & 0 & 0 &  \mathcal I   & 0  & \mathcal I  & 0   \\ 
\end{bmatrix}}} = \vcenter{\hbox{\includegraphics[scale=0.3]{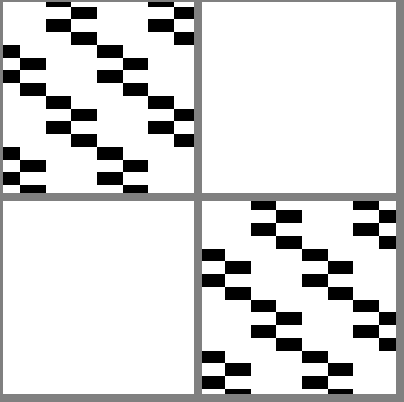}}}.
\]

To complete the incidence picture of the curve intersections of the $64$ components of $\textrm{SO}^*(5)$, we need only establish how the tori intersect the $\textrm{SO}^\circ(3) \times \textrm{SO}(1) \times \textrm{SO}(1)$ components. This can be done by hand using the combinatorics of the corresponding zero patterns, or via a direct computation in a computer algebra system, see \cite{BGmarvelousRepo}. For completeness, we provide the entire incidence matrix below in terms of $\mathcal I$ and the four matrices
\[
J_1 = \begin{bmatrix}
0 & 1 & 1 & 0 \\ 
1 & 0 & 0 & 1 \\
0 & 1 & 1 & 0 \\ 
1 & 0 & 0 & 1 
\end{bmatrix}, \quad \quad 
J_2 = \begin{bmatrix}
0 & 1 & 1 & 0 \\ 
0 & 1 & 1 & 0 \\ 
1 & 0 & 0 & 1 \\
1 & 0 & 0 & 1 
\end{bmatrix}, \quad \quad 
J_3 = \begin{bmatrix}
1 & 0 & 0 & 1 \\
0 & 1 & 1 & 0 \\ 
1 & 0 & 0 & 1 \\
0 & 1 & 1 & 0 
\end{bmatrix}, \quad \quad 
J_4 = \begin{bmatrix}
1 & 0 & 0 & 1 \\
1 & 0 & 0 & 1 \\
0 & 1 & 1 & 0 \\ 
0 & 1 & 1 & 0  
\end{bmatrix}.
\]
The complete curve intersection incidence matrix, along with its heatmap, is given below.
\begin{center}
${\footnotesize{\left[
\begin{array}{cccccccc||cccc|cccc}
0 & \mathcal I  & 0 & 0 & 0 & 0 & 0 & \mathcal I   & 0 & 0 & J_1 & J_2 & 0 & 0 & 0 & 0 \\
\mathcal I  & 0 & \mathcal I & 0 & 0 & 0 & 0 & 0   & 0 & 0 & 0 & 0 & J_3 & J_4 & 0 & 0 \\
0 & \mathcal I  & 0 & \mathcal I & 0 & 0 & 0 & 0   & 0 & J_1 & J_2 & 0 & 0 & 0 & 0 & 0 \\ 
0 & 0 & \mathcal I  & 0 & \mathcal I & 0 & 0 & 0   & 0 & 0 & 0 & 0 & J_4 & 0 & 0 & J_3 \\
0 & 0 & 0  & \mathcal I  & 0 & \mathcal I & 0 & 0  & J_1 & J_2 & 0 & 0 & 0 & 0 & 0 & 0 \\  
0 & 0 & 0  & 0  & \mathcal I  & 0 & \mathcal I & 0 & 0 & 0 & 0 & 0 & 0 & 0  & J_3 & J_4 \\   
0 & 0 & 0 & 0  & 0  & \mathcal I  & 0 & \mathcal I & J_2 & 0 & 0 & J_1 & 0 & 0 & 0 & 0 \\  
\mathcal I   & 0 & 0 & 0 & 0  & 0  & \mathcal I  & 0  &0 & 0 & 0 & 0 & 0 & J_3 & J_4 & 0 \\ \hline \hline 
0   & 0   & 0   & 0   & J_1 & 0   & J_2 & 0   &0 & \mathcal I  & 0 &   \mathcal I & 0 & 0 & 0 & 0  \\ 
0   & 0   & J_1 & 0   & J_2 & 0   & 0   & 0    &\mathcal I  & 0 & \mathcal I & 0 & 0 & 0 & 0 & 0   \\ 
J_1 & 0   & J_2 & 0   & 0   & 0   & 0   & 0    &0 & \mathcal I  & 0 & \mathcal I & 0 & 0 & 0 & 0    \\ 
J_2 & 0   & 0   & 0   & 0   & 0   & J_1 & 0    &\mathcal I & 0 & \mathcal I  & 0 & 0 & 0 & 0 & 0    \\  \hline 
0   & J_3 & 0   & J_4 & 0   & 0   & 0   & 0    &0 & 0 & 0  &0 & 0 & \mathcal I & 0 &  \mathcal I     \\ 
0   & J_4 & 0   & 0   & 0   & 0   & 0   & J_3    &0 & 0 & 0  & 0  & \mathcal I  & 0 & \mathcal I & 0    \\ 
 0  & 0   & 0   & 0   & 0   & J_3 & 0   & J_4   &0 & 0 & 0 & 0  & 0  & \mathcal I  & 0 & \mathcal I   \\ 
 0  & 0   & 0   & J_3 & 0   & J_4 & 0   & 0   &0   & 0 & 0 & 0 &  \mathcal I   & 0  & \mathcal I  & 0   
\end{array}\right] =}}$ $\vcenter{\hbox{\includegraphics[scale=0.3]{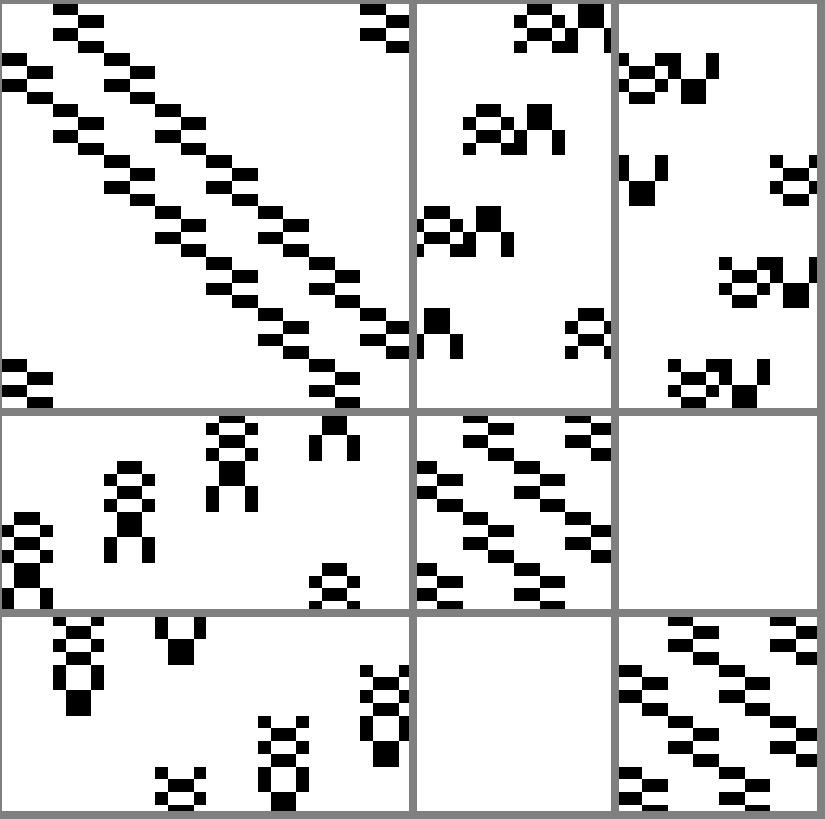}}}$
\end{center}
Each $\mathcal I, J_1,J_2,J_3,J_4$ involves eight ones. In total, this accounts for $512$ one's, representing  $256$ intersections. Each has the form of a circle realized as $\SO(2) \times \SO(1) \times \SO(1) \times \SO(1)$.  
\subsection{Two hundred eighty-eight signed permutation matrices}
There are $288$ unique zero-dimensional intersections of these $64$ components. Continuing the scheme of grouping elements by their zero patterns, these zero-dimensional intersections exhibit $18$ permutation matrix patterns; note that $288=2^4\cdot 18$. These $18$ permutation matrices are in bijection with the $18$ vertices of $\mathcal P$ in \autoref{fig:SO5Vertices} according to how they appear as intersections of the components. 
\begin{figure}[!htpb]
\includegraphics[scale=0.35]{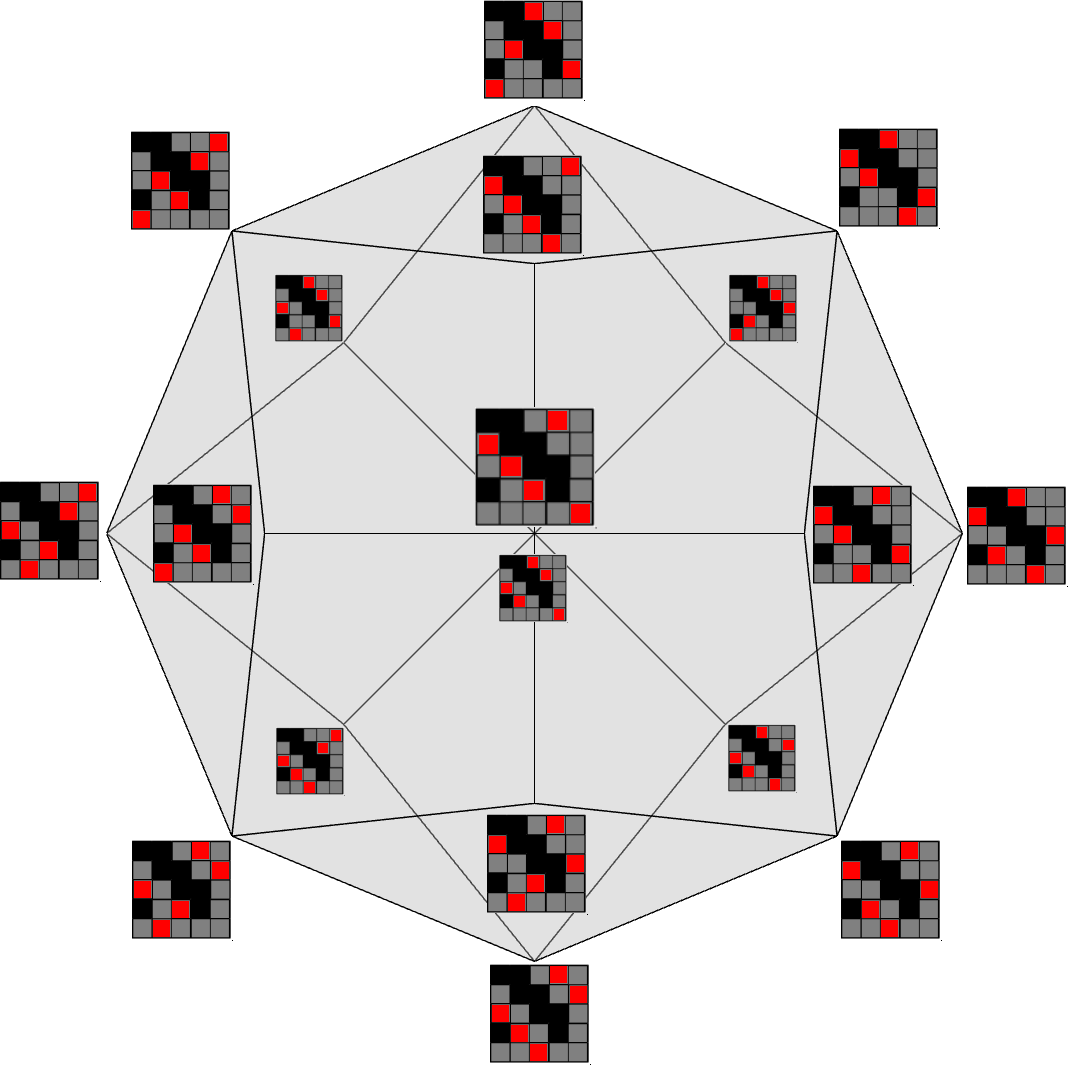}
\caption{The vertices of $\mathcal P$ correspond to the zero-dimensional intersections of components of $\SO^\star(5)$. Each represents $2^4$ signed permutation matrices.}
\label{fig:SO5Vertices}
\end{figure}

These $18$ vertices come in three flavours. These differences correspond to distinct cycle types.
\begin{enumerate}
\item Eight three-way intersections of components corresponding to the degree three vertices. Each is a $5$-cycle.
\item Two four-way intersections of $\SO^\circ(3) \times \SO(1) \times \SO(1)$ components. They have cycle types $(4,1)$ and $(2,2,1)$.
\item Eight intersections of one $\SO^\circ(3) \times \SO(1) \times \SO(1)$ and three torus components. Each has cycle type $(3,2)$.
\end{enumerate}
Each permutation $\sigma$ is subject to the condition that $\sigma(i) \not\in \{i,i+1 \textrm{ mod }4\}$ due to the zero pattern \eqref{eq:5zeropattern}. Unlike in $\HSO(4)$, these are \textit{all} of the permutation matrices which fit in the pattern \eqref{eq:5zeropattern}. 

\subsection{A totally real witness set of $\textrm{SO}(5)$} We obtained a totally real witness set for $\textrm{SO}(4)$ by choosing a special linear space through a subset of signed permutation matrices. In the case of $\SO(5)$, all of the signed permutation matrices compatible with the non-zero pattern \eqref{eq:5zeropattern} appear as intersections of components of $\SO^\star(5)$. In particular, each such point is singular and so a slice through any such permutation matrix would not be degree-generic. We leave the problem of using the $\SO^\star(5)$ decomposition to find a totally real witness set to future research.

\begin{conjecture}
There exists a codimension-two linear slice of $\SO^\star(5)$ which is totally real. 
\end{conjecture}

\section{Closing comments}
\subsection{A word on smaller orthogonal groups} We began with $n=4$ because the cases of $\SO(1)$ and $\SO(2)$ are trivial and $\SO(3)$ is almost trivial. The dimensions of $\SO(1)$ and $\SO(2)$ equal their rank. Thus, the empty zero pattern realizes each as a variety of dimension equal to its rank, in analogy with our previous results. The set of $3 \times 3$ special orthogonal matrices forms a three-dimensional variety of degree eight. Up to symmetries, there is a unique degree-generic coordinate slice of codimension two, namely, the one where two coordinates are set to zero which do not share a row or column. Such a slice $\{x_{11}=x_{22}=0\}$ decomposes $\textrm{SO}(3)$ into four circles:
\[
X_{12+}:
{\footnotesize{\begin{bmatrix}
0 & 1 & 0 \\
x_{21} & 0 & x_{23} \\
x_{31} & 0 & x_{33}
\end{bmatrix}}}, \quad \quad  X_{12-}:
{\footnotesize{\begin{bmatrix}
0 & -1 & 0 \\
x_{21} & 0 & x_{23} \\
x_{31} & 0 & x_{33}
\end{bmatrix}}}, \quad \quad  X_{21+}:
{\footnotesize{\begin{bmatrix}
0 & x_{12} & x_{13} \\
1 & 0 & 0 \\
0 & x_{31} & x_{33}
\end{bmatrix}}},\quad \quad  X_{21-}:
{\footnotesize{\begin{bmatrix}
0 & x_{12} & x_{13} \\
-1 & 0 & 0 \\
0 & x_{31} & x_{33}
\end{bmatrix}}}.
\]
The intersection incidence graph of these four curves is the polygon with four edges, with $X_{ij\pm}$ on opposite sides. The four vertices correspond to the transposition $(1,2)$ with signs.

\subsection{The necessity of the quadruples}
The beautiful story for $\textrm{SO}(4)$ struggles to extend to $\textrm{SO}(5)$: although there exists a unique zero pattern \eqref{eq:5zeropattern} which decomposes $\textrm{SO}(5)$ into many components of dimension $\textrm{rank}(\textrm{SO}(5)) = 2$, this pattern is more mysterious than simply being hollow, and one must combine components to encode the incidence structure in the face lattice of a polytope.

\begin{proposition}
There does not exist a $3$-polytope whose $64$ facets are in bijection with the components of $\SO^\star(5)$ such that its facets intersect in an edge if and only if the corresponding components intersect in a curve. 
\end{proposition}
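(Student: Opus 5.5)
The plan is to argue through the dual graph. Suppose a $3$-polytope $Q$ had its $64$ facets in bijection with the components of $\SO^\star(5)$, with two facets sharing an edge exactly when the corresponding components meet in a curve. Then the dual graph of $Q$, whose vertices are the facets and whose edges join facets sharing an edge, would be isomorphic to the curve-intersection graph $G$ of the $64$ components. That graph is encoded by the full $64\times 64$ incidence matrix displayed above. By Steinitz's theorem, the dual graph of a $3$-polytope is again the graph of a $3$-polytope, namely the polar. So it suffices to show that $G$ is not a $3$-connected planar graph.

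The first step is to read off the structure of $G$ from the incidence matrix.
\begin{enumerate}
\item Each block row contributes a fixed number of ones, so degrees can be counted directly.
\item For a torus, the torus--torus part gives $2+2=4$ neighbours, and the torus--$\SO^\circ(3)$ part gives two $J$-blocks with two ones per row, hence $4$ more. So each torus has degree $8$.
\item For an $\SO^\circ(3)\times\SO(1)\times\SO(1)$ component, the computation is similar: $2+2$ from the $\mathcal I$ blocks and $2+2$ from the $J$ blocks, so again degree $8$.
\end{enumerate}
The total is $64\cdot 8/2=256$ edges, matching the $256$ circles. A $3$-connected planar graph on $V=64$ vertices has at most $3V-6=186$ edges, which is strictly less than $256$. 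Hence $G$ is not planar, and therefore cannot be the dual graph of any $3$-polytope. This contradiction proves the proposition.

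It is worth double-checking the edge count independently of the matrix. The paper states that there are $256$ curve intersections, and each curve intersection is a pair of components meeting in a curve, so $|E(G)|=256$. One might worry that a single pair of components could meet in several circles, which would count one edge multiple times. I would settle this by checking that the matrix entries are $0/1$, so that $256$ really counts distinct pairs. Even if some pairs were repeated, the bound survives unless the number of distinct pairs falls below $187$, which the $512$ ones in a $0/1$ matrix rule out.

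The main point of care is this identification between the count of curve intersections and the count of adjacent pairs, that is, that the incidence matrix is genuinely $0/1$ and symmetric. Everything else is Euler's formula. The key inequality follows from $V-E+F=2$ together with $2E\ge 3F$, since every face of a polytope graph has at least three sides. In fact, no planarity argument beyond this is needed: the average degree of $G$ is $8$, whereas every planar graph has average degree less than $6$.
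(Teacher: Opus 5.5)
Your proof is correct and is essentially the paper's argument in dual form. The paper uses Euler's formula to get $194=256-64+2$ vertices of the polytope and notes that vertex degree at least three would force at least $291$ edges; this is exactly your bound $|E|\le 3\cdot 64-6=186<256$ for the facet-adjacency graph, read through the facet--vertex duality (and Steinitz is not needed, since planarity of that graph follows directly from polarity).
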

\begin{proof}
The incidence matrix of the $64$ components has $512$ ones, and thus such a polytope would have $256$ edges. By Euler's formula, the number of vertices would have to be $194=256-64+2$. If every vertex had degree at least three then there would be at least $\frac{194\cdot 3}{2}=  291$ edges. So some vertex has degree less than three, a contradiction to it being a vertex of a $3$-polytope. 
\end{proof}

\subsection{Extending to $\textrm{SO}(6)$} 
For $n=6$, we have a result in the negative direction. Continuing the pattern for $n=4, 5$, we seek a zero pattern which is degree and dimension-generic and cuts $\SO(6)$ to  dimension three, that of its maximal torus. We hope such a pattern exists which decomposes $\textrm{SO}(6)$ significantly.  To find such a slice, one must identify a corresponding pattern of $12$ zeros.
\begin{theorem}
\label{thm:SO6}
There is no pattern of $12$ zeros in a $6 \times 6$ matrix which represents a slice of codimension $12$ that is degree and dimension-generic with respect to $\textrm{SO}(6)$. 
\end{theorem}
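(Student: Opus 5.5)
The proof will be an exhaustive computer search, organised to be finite and feasible by exploiting symmetry; the paper itself says \autoref{thm:SO6} is the one result not verifiable by hand.

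\textbf{Symmetry reduction.} A zero pattern is a subset $Z$ of the $36$ entry positions with $|Z|=12$. The group $\mathfrak S_6\times\mathfrak S_6$ acts by permuting rows and columns, and transposition acts as well. Each of these maps sends $\SO(6)$ to $\pm\SO(6)$, so it is enough to compose with a fixed odd signed permutation; this preserves degree and dimension of coordinate slices. Genericity of a pattern is therefore invariant under the group $(\mathfrak S_6\times\mathfrak S_6)\rtimes\mathbb Z/2$. I will enumerate orbit representatives of $12$-element subsets of the $6\times 6$ grid, viewed as bipartite graphs with $12$ edges on $6+6$ vertices up to the swap, using canonical forms in \texttt{nauty} \cite{Nauty}.

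\textbf{Cheap combinatorial filters.} Before any algebra, I will discard patterns that are obviously non-generic.
\begin{itemize}
\item A row or column containing $6$ zeros gives the empty set.
\item A $k\times \ell$ all-zero block with $k+\ell>6$ forces singularity, by Frobenius--K\"onig.
\item A $k\times\ell$ zero block with $k+\ell=6$ forces a block-diagonal (after permuting) structure $\O(k)\times\O(\ell)$. This drops dimension or degree in a computable way, and I will check whether it can still total $\deg\SO(6)$.
\item Every remaining zero pattern must still allow a permutation matrix avoiding $Z$ in the right parity, otherwise the variety has no nice points. This is a weak check only.
\end{itemize}

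\textbf{Algebraic test on survivors.} For each surviving representative I will decide dimension and degree. By the same reasoning as for $\HSO(4)$ (\cite[Corollary 2.5]{3264}, using smoothness of $\SO(6)$), the slice $\SO(6)\cap\{x_{ij}=0:(i,j)\in Z\}$ has degree at most $\deg\SO(6)$ whenever it has the expected dimension $3$. So the pattern is generic exactly when the dimension is $3$ and the degree equals $\deg\SO(6)$, taken from \cite{SOn}.

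Both quantities can be computed over a finite field: take the ideal generated by $MM^T-I$, $\det M-1$, and the coordinate equations, compute a Gr\"obner basis, and read off the Hilbert polynomial (\texttt{Macaulay2}/\texttt{OSCAR}). A faster route is to intersect with $3$ random affine hyperplanes and count solutions with a monodromy or numerical solver. A count strictly less than $\deg\SO(6)$, or a wrong dimension, certifies failure; a finite-field computation with lower degree suffices by semicontinuity, modulo care about bad primes.

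\textbf{Main obstacle.} The hard part is sheer size. There are $\binom{36}{12}\approx 1.25\times 10^9$ subsets, giving roughly $10^9/(2\cdot 720^2)\approx 10^3$ orbits or somewhat more. Each survivor needs a Gr\"obner computation on a $3$-fold in $36$ variables.

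To make this tractable, I plan to first use the necessary condition that the dimension equals $3$. This can be tested cheaply via the rank of the Jacobian at a random point of each component found by sampling, for example at signed permutation matrices compatible with $Z$ together with local perturbations. That should prune most patterns. I will then run the full degree computation only on the few remaining ones and exhibit a degree deficit for each, completing the proof.
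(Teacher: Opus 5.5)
Your overall strategy is the paper's. You enumerate $12$-edge $(6,6)$-bipartite graphs up to symmetry with \texttt{nauty}, discard those with a vertex of degree six (a full zero row or column), and compute the dimension and degree of each remaining slice by computer algebra. The paper finds $5816$ representatives, so your estimate of about $10^3$ orbits is low, but that is harmless.

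The gap is in how you certify failure for the patterns you do not treat by an exact computation. For each pattern you must show one of two things:
\begin{itemize}
\item the slice is empty or has a component of dimension at least $4$ (every component automatically has dimension at least $3$), which is a \emph{lower} bound on dimension;
\item or its degree is strictly below $\deg\SO(6)$, which is an \emph{upper} bound on degree.
\end{itemize}
Each of your shortcuts produces a bound in the opposite direction, so the patterns they eliminate are left unproven.

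\textbf{Jacobian corank.} The corank of the Jacobian at a point only bounds the local dimension from above, and signed permutation matrices are typically where it overshoots. Take $\HSO(4)$ itself and the double transposition $P$ with $\sigma=(12)(34)$. The four conditions $\mathrm{diag}(AP)=0$ on $A\in\mathfrak{so}(4)$ reduce to the two conditions $A_{12}=A_{34}=0$. So the tangent space there is $4$-dimensional although $\HSO(4)$ is a surface, and your test would discard the paper's own positive example for $n=4$. Moving from such points to nearby points of the slice requires path tracking and gives no guarantee of meeting every component.

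\textbf{Numerical solving.} A monodromy or homotopy computation only exhibits solutions. Finding fewer than $\deg\SO(6)$ of them is therefore a lower bound, not a certificate of a degree deficit.

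\textbf{Reduction mod $p$.} Reduction modulo $p$ is not semicontinuous in the direction you claim for affine slices, because solutions can escape to infinity. For example, $x(px-1)$ has two roots over $\mathbb{Q}$ but only one over $\mathbb{F}_p$. So a degree deficit over $\mathbb{F}_p$ proves nothing unless you separately show that $p$ is a good prime for that ideal.

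The fix is what the paper does: an exact symbolic computation of dimension and degree in Macaulay2 for every one of the $5816$ representatives. This is feasible, so no heuristic pruning is needed.

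Separately, drop the parity clause in your last filter. Every permutation, of either parity, is realised in $\SO(6)$ by a suitable signed permutation matrix, so parity can never be used to discard a pattern.
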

\begin{proof}
This is proven by exhaustive computation over all $(6,6)$-bipartite graphs with $12$ edges and no vertex of degree six  or higher. The computation first collects all $5816$ such graphs up to symmetry using Nauty \cite{Nauty}. For each, we symbolically compute the dimension and degree of the corresponding slice in Macaulay2 \cite{Macaulay2}; no dimension-generic and no degree-generic section arises (even over complex numbers). The full results are available at \cite{BGmarvelousRepo}.
\end{proof}
\autoref{thm:SO6} tells us that generalizing what is done in this paper to $n=6$ is impossible if one hopes for too many of the properties of the previous cases to persist. 
 An alternative approach is to interpret the zero pattern as affine spans of signed permutation matrices. Generalizing from this point of view requires deep knowledge of the \textit{algebraic matroid} of signed permutation matrices. 

\subsection{The polytope $\mathcal P$} As far as the authors are aware, the polytope $\mathcal P$ illustrated on the right of \autoref{fig:polytopes} has never been identified in a research article. Like the Johnson solid J17, our polytope $\mathcal P$ is comprised of two halves: the near and far halves of the right of \autoref{fig:polytopes}. In both J17 and $\mathcal P$, these halves are twisted by $45$ degrees. However, $\mathcal P$ does not have an edge-cycle in a hyperplane equator (like a bicupola) nor a belt (like an elongated bicupola). Our polytope has the same $f$-vector as J28 and J29. Moreover, the combinatorial symmetries of J29 and $\mathcal P$ are both $D_{16}$.  Nevertheless, our polytope $\mathcal P$ is combinatorially distinct from all of these examples.

{ \small 
\bibliographystyle{alphaurl}
\bibliography{references}

\newcommand{\etalchar}[1]{$^{#1}$}
\begin{thebibliography}{BBB{\etalchar{+}}17}

\bibitem[BBB{\etalchar{+}}17]{SOn}
M.~Brandt, J.~Bruce, T.~Brysiewicz, R.~Krone, and E.~Robeva.
\newblock The degree of {SO(n,$\mathbb{C}$)}.
\newblock {\em Combinatorial Algebraic Geometry}, pages 229--246, 2017.
\newblock \href {https://doi.org/10.1007/978-1-4939-7486-3_11}
  {\path{doi:10.1007/978-1-4939-7486-3_11}}.

\bibitem[BG21]{StiefelManifolds}
T.~Brysiewicz and F.~Gesmundo.
\newblock {The Degree of Stiefel Manifolds}.
\newblock {\em Enum. Comb. Appl.}, 1(3):n.S2R20, 2021.
\newblock \href {https://doi.org/10.54550/ECA2021V1S3R20}
  {\path{doi:10.54550/ECA2021V1S3R20}}.

\bibitem[BG26]{BGmarvelousRepo}
T.~Brysiewicz and F.~Gesmundo.
\newblock Marvelous slices of orthogonal matrices.
\newblock
  \url{https://github.com/fulges/Marvelous_Slices_of_Orthogonal_Matrices},
  2026.
\newblock GitHub repository accompanying the paper.

\bibitem[Col10]{Colbourn2010}
C.~J. Colbourn.
\newblock {\em CRC handbook of combinatorial designs}.
\newblock CRC press, 2010.

\bibitem[DEF{\etalchar{+}}24]{OscarBook}
W.~Decker, C.~Eder, C.~Fieker, M.~Horn, and M.~Joswig, editors.
\newblock {\em The {C}omputer {A}lgebra {S}ystem {OSCAR}: {A}lgorithms and
  {E}xamples}, volume~32 of {\em Algorithms and {C}omputation in
  {M}athematics}.
\newblock Springer, 1 edition, 8 2024.
\newblock URL: \url{https://link.springer.com/book/9783031621260}.

\bibitem[EH16]{3264}
D.~Eisenbud and J.~Harris.
\newblock {\em 3264 \& All That: A Second Course in Algebraic Geometry}.
\newblock Cambridge University Press, 2016.

\bibitem[Gen07]{gentle2007matrix}
J.~E. Gentle.
\newblock {\em {Matrix algebra: theory, computations, and applications in
  statistics}}.
\newblock Springer, 2007.

\bibitem[GJ00]{Polymake}
E.~Gawrilow and M.~Joswig.
\newblock polymake: a framework for analyzing convex polytopes.
\newblock In {\em Polytopes---Combinatorics and Computation (Oberwolfach,
  1997)}, volume~29 of {\em DMV Sem.}, pages 43--73. Birkh\"auser, Basel, 2000.

\bibitem[GS26]{Macaulay2}
D.~R. Grayson and M.~E. Stillman.
\newblock Macaulay2, a software system for research in algebraic geometry.
  v.1.21.
\newblock Available at \url{http://www2.macaulay2.com}, 2026.

\bibitem[Har92]{HarrisAG}
J.~Harris.
\newblock {\em Algebraic Geometry: A First Course}, volume 133 of {\em Graduate
  Texts in Mathematics}.
\newblock Springer, New York, 1992.

\bibitem[Jou83]{BertiniBook}
J.-P. Jouanolou.
\newblock {\em Th{\'e}or{\`e}mes de Bertini et applications}, volume~42 of {\em
  Progress in Mathematics}.
\newblock Birkh{\"a}user, Boston, 1983.

\bibitem[Kaz87]{Kazarnovskii}
B.~Y. Kazarnovskii.
\newblock {Newton polyhedra and the Bezout formula for matrix-valued functions
  of finite-dimensional representations}.
\newblock {\em Func. An. Appl.}, 21(4):319–321, 1987.
\newblock \href {https://doi.org/10.1007/BF01077809}
  {\path{doi:10.1007/BF01077809}}.

\bibitem[MP13]{Nauty}
B.~D. McKay and A.~Piperno.
\newblock Practical graph isomorphism, ii.
\newblock {\em Journal of Symbolic Computation}, 60:94--112, 2013.
\newblock \href {https://doi.org/10.1016/j.jsc.2013.09.003}
  {\path{doi:10.1016/j.jsc.2013.09.003}}.

\bibitem[Nic25]{Nic}
D.~R. Nicholus.
\newblock {Zeroing Diagonals, Conjugate Hollowization, and Characterizing
  Nondefinite Operators}.
\newblock {\em arXiv:2508.00096}, 2025.

\bibitem[NRS10]{LowRankSDP}
J.~Nie, K.~Ranestad, and B.~Sturmfels.
\newblock The algebraic degree of semidefinite programming.
\newblock {\em Mathematical Programming, Series A}, 122(2):379--405, 2010.
\newblock \href {https://doi.org/10.1007/s10107-008-0253-6}
  {\path{doi:10.1007/s10107-008-0253-6}}.

\bibitem[OSC24]{Oscar}
Oscar -- open source computer algebra research system, version 1.2.0-dev, 2024.
\newblock URL: \url{https://www.oscar-system.org}.

\bibitem[SVW05]{NAG}
A.~J. Sommese, J.~Verschelde, and C.~W. Wampler.
\newblock Introduction to numerical algebraic geometry.
\newblock In {\em Solving polynomial equations}, volume~14 of {\em Algorithms
  Comput. Math.}, pages 301--335. Springer, Berlin, 2005.
\newblock \href {https://doi.org/10.1007/3-540-27357-3_8}
  {\path{doi:10.1007/3-540-27357-3_8}}.

\bibitem[Tro02]{trosset2002extensions}
M.~W. Trosset.
\newblock {Extensions of classical multidimensional scaling via variable
  reduction}.
\newblock {\em Computational Statistics}, 17(2):147--163, 2002.
\newblock \href {https://doi.org/10.1007/s001800200099}
  {\path{doi:10.1007/s001800200099}}.

\end{thebibliography}
}
\end{document}